\newtheorem*{maintheorem*}{Main Theorem}
\newtheorem{theorem}{Theorem}[section]
\newtheorem{prop}[theorem]{Proposition}
\newtheorem{lemma}[theorem]{Lemma}
\newtheorem{cor}[theorem]{Corollary}
\theoremstyle{definition}
\newtheorem{definition}[theorem]{Definition}
\newtheorem{remark}[theorem]{Remark}
\newtheorem{example}[theorem]{Example}
\numberwithin{equation}{section}
\newcommand{\nn}{\mathbb{N}}
\newcommand{\qq}{\mathbb{Q}}
\newcommand{\rr}{\mathbb{R}}
\newcommand{\zz}{\mathbb{Z}}
\newcommand{\supp}{\mathsf{Supp}}
\newcommand{\uu}{\mathcal{U}}
\newcommand{\mcd}{\text{mcd}}
\newcommand{\red}{\text{red}}
\keywords{semidomain, additively reduced semidomain, bounded factorization semidomain, finite factorization semidomain, unique factorization semidomain, full and infinite elasticity}
\subjclass[2010]{Primary: 16Y60, 13F20; Secondary: 06F05, 20M13}
\begin{document}
	
	\mbox{}
	\title{Arithmetic of additively reduced monoid semidomains}

	\author{Scott T. Chapman$^1$}
	\thanks{$^1$Department of Mathematics and Statistics, Sam Houston State University}
	\address{Department of Mathematics and Statistics\\Sam Houston State University\\Huntsville, TX 77341}
	\email{scott.chapman@shsu.edu}
	
	\author{Harold Polo$^{2}$}
	\thanks{$^2$Department of Mathematics, University of Florida. Corresponding author: haroldpolo@ufl.edu}
	\address{Department of Mathematics\\University of Florida\\Gainesville, FL 32611}
	\email{haroldpolo@ufl.edu}
	
\date{\today}

\begin{abstract}
	 The term semidomain refers to a subset $S$ of an integral domain $R$, in which the pairs $(S,+)$ and $(S, \cdot)$ are semigroups with identities. If $S$ contains no additive inverses except $0$, we say that $S$ is additively reduced. By taking polynomial expressions with coefficients in $S$ and exponents in a torsion-free monoid $M$, we obtain the additively reduced monoid semidomain $S[M]$. In this paper, we investigate the factorization properties of such semidomains, providing necessary and sufficient conditions for them to be bounded factorization semidomains, finite factorization semidomains, and unique factorization semidomains. We also identify large classes of semidomains with full and infinity elasticity. Throughout the paper, we present examples to help elucidate the arithmetic of additively reduced monoid semidomains.
\end{abstract}
\medskip

\maketitle

%\tableofcontents

\bigskip
%%%%%%%%%%%
%%%%%%%%%%%
\section{Introduction}
\label{sec:intro}

It is well known that the polynomial extension of a unique factorization domain (UFD) is also a unique factorization domain. In particular, the integral domain $\zz[x]$ is a UFD given that $\zz$ is a UFD. In contrast, it is not hard to find nonunit elements in the multiplicative monoid $\nn_0[x]^*$, the monoid of polynomials with positive coefficients, having multiple factorizations (see, for instance, \cite{HN1950}), even though the multiplicative monoid $\nn$ is a unique factorization monoid. In other words, the unique factorization property does not ascend from $\nn_0$ to its polynomial extension $\nn_0[x]$. This makes the factorization properties of $\nn_0[x]$ much more interesting than those of $\zz[x]$. As a matter of fact, the arithmetic of $\nn_0[x]$ and its valuations has been the subject of various articles. In~\cite{CF19}, Campanini and Facchini investigated the arithmetic and ideal structure of $\nn_0[x]$, while Brunotte~\cite{brunotte} studied the factors with positive coefficients of a given polynomial with no positive roots. In addition, Baeth and Gotti~\cite{BG20} briefly explored the factorization properties of the multiplicative monoid $\nn_0[r]^*$, where $r$ is a positive rational number.

A subset $S$ of an integral domain $R$ is called a \emph{semidomain} if the pairs $(S,+)$ and $(S, \cdot)$ form semigroups with identities. We say that $S$ is \emph{additively reduced} if $S$ contains no additive inverses except $0$. Given an additively reduced semidomain $S$ and a cancellative, commutative, and torsion-free monoid $M$, we can define the semidomain $S[M]$, which consists of polynomial expressions with coefficients in $S$ and exponents in $M$. We refer to these objects as \emph{additively reduced monoid semidomains}. The simplest additively reduced monoid semidomain is clearly $\nn_0[x]$, but other examples have also been investigated. Motivated by potential applications in control theory, Barnard et al.~\cite{BDPW1991} analyzed the relationship between the pairs of conjugate roots of a polynomial $f \in \rr_{\geq 0}[x]$ and the divisors of $f$ with positive coefficients, while Cesarz et al.~\cite{CCMS09} investigated the elasticity and delta set of $\rr_{\geq 0}[x]$. Moreover, Ponomarenko~\cite{VP15} studied the factorization properties of semigroup semirings, a class of semirings that share a substantial overlap with additively reduced semidomains.

In this paper, we aim to explore the arithmetic of additively reduced monoid semidomains, and our work is structured as follows. In Section~\ref{sec:background}, we introduce the necessary background to follow our exposition. Our first results are presented in Section~\ref{sec:atomicity}, where we provide necessary and sufficient conditions for an additively reduced monoid semidomain to be atomic and to satisfy the ACCP. Then, in Section~\ref{sec: bounded and finite fact properties}, we focus on the bounded and finite factorization properties. Specifically, we show that an additively reduced monoid semidomain $S[M]$ is a BFS (resp., an FFS) if and only if $S$ is a BFS (resp., an FFS) and $M$ is a BFM (resp., an FFM). Section~\ref{sec: factoriality} is devoted to the study of the factoriality properties of additively reduced monoid semidomains. Here we prove that an additively reduced monoid semidomain $S[M]$ is a UFS if and only if $M$ is the trivial group and $S$ is a UFS. Finally, we conclude by providing large classes of semidomains with full and infinite elasticity.

\bigskip
\section{Background}
\label{sec:background}

We now review some of the standard notation and terminology we shall be using later. Reference material on factorization theory and semiring theory can be found in the monographs \cite{GH06} and \cite{JG1999}, respectively. We let $\nn$, $\zz, \qq$, and $\rr$ denote the sets of positive integers, integers, rational numbers, and real numbers, respectively, and we set $\nn_0 := \{0\} \cup \nn$. In addition, given $r \in \rr$ and $S \subseteq \rr$, we set $S_{>r} \coloneqq \{s \in S \mid s > r\}$. We define $S_{\geq r}$ in a similar way. For $m,n \in \nn_0$, we set $\llbracket m,n \rrbracket \coloneqq \{k \in \zz \mid m \leq k \leq n\}$. Given $q \in \qq_{>0}$, we can express it uniquely as $q = d^{-1}n$, where $n, d \in \nn$ and $\gcd(n,d) = 1$. We call $n$ and $d$ the \emph{numerator} and \emph{denominator} of $q$, respectively, and denote them by $\mathsf{n}(q)$ and $\mathsf{d}(q)$, respectively.

\subsection{Monoids and Factorizations}
\smallskip

Throughout this paper, a \emph{monoid} is defined to be a semigroup with identity that is cancellative and commutative and, unless we specify otherwise, we will use multiplicative notation for monoids. For the rest of the section, let $M$ be a monoid. To denote the group of units (i.e., invertible elements) of $M$, we use the notation $M^{\times}$. When using additive notation, we will refer to this group as $\mathcal{U}(M)$. Additionally, we let $M_{\red}$ denote the quotient monoid $M/M^{\times}$. We say that $M$ is \emph{reduced} provided that the group of units of $M$ is trivial; in this case, we identify $M_{\red}$ with $M$. On the other hand, the monoid $M$ is \emph{torsion-free} if for all $b,c \in M$ and $n \in \nn$, we have that $b^n = c^n$ implies that $b = c$. Given a subset $S$ of $M$, we let $\langle S \rangle$ denote the smallest submonoid of $M$ containing $S$. We denote by $\mathcal{G}(M)$ the \emph{Grothendieck group} of $M$, which is the unique (up to isomorphism) abelian group satisfying the property that any abelian group containing a homomorphic image of $M$ must also contain a homomorphic image of $\mathcal{G}(M)$.

For elements $b,c \in M$, we say that $b$ \emph{divides} $c$ \emph{in} $M$ if there exists $b' \in M$ such that $c = b b'$; in this case, we write $b \mid_M c$, dropping the subscript whenever $M$ is the multiplicative monoid of the natural numbers. On the other hand, two elements $b,c \in M$ are \emph{associates}, which we denote by $b \simeq_M c$, provided that $b = u\cdot c$ for some $u \in M^{\times}$. A submonoid $N$ of $M$ is \emph{divisor-closed} if for every $b \in N$ and $c \in M$ the relation $c \mid_M b$ implies that $c \in N$. Let $S$ be a nonempty subset of $M$. We use the term \emph{common divisor} of $S$ to refer to an element $d\in M$ that divides all elements of $S$. On the other hand, we call a common divisor $d$ of $S$ a \emph{greatest common divisor} if it is divisible by all other common divisors of $S$. Moreover, a common divisor of $S$ is a \emph{maximal common divisor} if every greatest common divisor of $S/d$ is a unit of $M$. We denote by $\gcd_M(S)$ (resp., $\mcd_M(S)$) the set consisting of all greatest common divisors (resp., maximal common divisors) of $S$. We say that $M$ is a \emph{GCD-monoid} (resp., an \emph{MCD-monoid}) on the condition that every nonempty and finite subset of $M$ has a greatest common divisor (resp., maximal common divisor).

An element $a \in M$ is called an \emph{atom} if for every $b, c \in M$ the equality $a = bc$ implies that either $b \in M^{\times}$ or $c \in M^{\times}$. We denote by $\mathcal{A}(M)$ the set of atoms of $M$. We say that $M$ is \emph{atomic} provided that every element in $M \setminus M^{\times}$ can be written as a finite product of atoms. It is easy to verify that $M$ is atomic if and only if $M_{\red}$ is atomic. On the other hand, a subset $I$ of $M$ is an \emph{ideal} of $M$ provided that $I M \subseteq I$. An ideal $I$ of $M$ is \emph{principal} if $I = bM$ for some $b \in M$. We say that $M$ satisfies the \emph{ascending chain condition on principal ideals} (\emph{ACCP}) if every increasing sequence of principal ideals of $M$ (under inclusion) eventually terminates. It is not hard to see that if a monoid satisfies the ACCP then it is atomic.

Suppose now that the monoid $M$ is atomic. We denote by $\mathsf{Z}(M)$ the free (commutative) monoid on $\mathcal{A}(M_{\red})$ whose elements we call \emph{factorizations}. Given a factorization $z = a_1 \cdots a_\ell \in \mathsf{Z}(M)$, where $a_1, \ldots, a_\ell \in \mathcal{A}(M_{\red})$, it is said that $\ell$ is the \emph{length} of $z$. We let $\lvert z\rvert$ denote the length of a factorization $z \in \mathsf{Z}(M)$. Let $\pi \colon \mathsf{Z}(M) \to M_{\red}$ be the unique (monoid) homomorphism fixing the set $\mathcal{A}(M_{\red})$. For every element $b \in M$, the following sets associated to the element $b$ play a crucial role in the study of factorizations:
\begin{equation} \label{eq:sets of factorizations/lengths}
	\mathsf{Z}_M(b) \coloneqq \pi^{-1}(b\, \mathcal{U}(M)) \subseteq \mathsf{Z}(M) \hspace{0.1 cm}\text{ and } \hspace{0.1 cm}\mathsf{L}_M(b) \coloneqq \left\{\vert z\vert : z \in\mathsf{Z}_M(b)\right\} \subseteq \nn_0.
\end{equation}
The subscript in~\eqref{eq:sets of factorizations/lengths} is dropped when there seems to be no risk of confusion. Following~\cite{fHK92}, the monoid $M$ is called a \emph{finite factorization monoid} (\emph{FFM}) if $\mathsf{Z}(b)$ is finite for every $b \in M$, and $M$ is called a \emph{bounded factorization monoid} (\emph{BFM}) if $\mathsf{L}(b)$ is finite for all $b \in M$. It is evident that every FFM is a BFM and, by \cite[Corollary~1.3.3]{GH06}, every BFM satisfies the ACCP. Following~\cite{aZ76}, we say that~$M$ is a \emph{half-factorial monoid} (\emph{HFM}) provided that $\vert\mathsf{L}(b)\vert = 1$ for every $b \in M$. Moreover, a monoid $M$ is called \emph{factorial} or a \emph{unique factorization monoid} (\emph{UFM}) if $\vert \mathsf{Z}(b)\vert = 1$ for all $b \in M$. It is clear that a UFM is an HFM and also that an HFM is a BFM. Finally, we follow the terminology in~\cite{CCGS21} and say that~$M$ is a \emph{length-factorial monoid} (\emph{LFM}) if for every $b \in M$ and $z, z' \in \mathsf{Z}(b)$, the equality $\vert z\vert = \vert z'\vert$ implies that $z = z'$. It is obvious that if a monoid is a UFM then it is an LFM.

\subsection{Semirings and Semidomains}
\smallskip

A \emph{semiring} $S$ is a (nonempty) set endowed with two binary operations denoted by `$\cdot$' and `$+$' and called \emph{multiplication} and \emph{addition}, respectively, such that the following conditions hold:
\begin{enumerate}
	\item $(S, \cdot)$ is a commutative semigroup with an identity element denoted by $1$;
	
	\item $(S,+)$ is a monoid with its identity element denoted by $0$;
	
	\item $b \cdot (c+d)= b \cdot c + b \cdot d$ for all $b, c, d \in S$;
	
	\item $0 \cdot b = 0$ for all $b \in S$.
\end{enumerate}
We sometimes write $b c$ instead of $b \cdot c$ for elements $b,c$ in a semiring $S$. We would like to emphasize that a more general notion of a `semiring' does not usually assume the commutativity of the underlying multiplicative semigroup, but these algebraic objects are not of interest in the scope of this article. 

If $R$ and $S$ are semirings then a function $\sigma\colon R \to S$ is a \emph{semiring homomorphism} if, for all $b,c \in R$, the following conditions hold:
\begin{enumerate}
	\item $\sigma(bc) = \sigma(b)\sigma(c)$;
	
	\item $\sigma(b + c) = \sigma(b) + \sigma(c)$;
	
	\item $\sigma(1) = 1$;
	
	\item $\sigma(0) = 0$.
\end{enumerate}
We say that $\sigma$ is a \emph{semiring isomorphism} provided that $\sigma$ is injective and surjective. On the other hand, a subset $S'$ of a semiring $S$ is a \emph{subsemiring} of~$S$ if $(S',+)$ is a submonoid of $(S,+)$ that contains~$1$ and is closed under multiplication. Clearly, every subsemiring of $S$ is a semiring.

\begin{definition}
	A \emph{semidomain} is a subsemiring of an integral domain.
\end{definition}

Let $S$ be a semidomain. We say that $(S \setminus \{0\}, \cdot)$ is the \emph{multiplicative monoid} of $S$, and we denote it by $S^*$. Following standard notation from ring theory, we refer to the units of the multiplicative monoid $S^*$ simply as \emph{units}, and we refer to the units of $(S,+)$ as invertible elements without risk of ambiguity; we let $S^\times$ denote the group of units of $S$, while we let $\uu(S)$ denote the additive group of invertible elements of $S$. We denote the set of atoms of the multiplicative monoid $S^*$ as $\mathcal{A}(S)$ instead of $\mathcal{A}(S^*)$. Also, for $b,c \in S$ such that $b$ and $c$ are associates in $S^*$, we write $b \simeq_{S} c$ (instead of $b \simeq_{S^*} c$). Similarly, for $b,c \in S$ such that $b$ divides~$c$ in~$S^*$, we write $b \mid_S c$ (instead of $b \mid_{S^*} c$).

\begin{lemma} \cite[Lemma~2.2]{fghp2022} \label{lem:characterization of integral semirings}
	For a semiring $S$, the following conditions are equivalent.
	\begin{enumerate}
		\item The multiplication of $S$ extends to $\mathcal{G}(S)$ turning $\mathcal{G}(S)$ into an integral domain.
		
		\item $S$ is a semidomain.
	\end{enumerate}
\end{lemma}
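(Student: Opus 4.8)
The plan is to prove the two implications separately, keeping in mind throughout that the very formulation of~(1) presupposes that the canonical homomorphism from $(S,+)$ into its Grothendieck group is injective: when $(S,+)$ is cancellative (which is forced by either condition), $\mathcal{G}(S)$ may be realized concretely as the group of formal differences $G := \{s - t \mid s,t \in S\}$, and this is the only situation in which ``the multiplication of $S$ extends to $\mathcal{G}(S)$'' makes literal sense. So I would open the proof by recording this concrete description of $\mathcal{G}(S)$.

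For $(2) \Rightarrow (1)$, suppose $S$ is a subsemiring of an integral domain $R$. Then $(S,+)$ is a submonoid of the group $(R,+)$ and hence cancellative, so the subgroup of $(R,+)$ generated by $S$ is exactly $G = \{s-t \mid s,t\in S\}$, which is canonically isomorphic to $\mathcal{G}(S)$. I would then run the one-line bilinear computation $(s-t)(u-v) = (su+tv) - (sv+tu)$ to conclude that $G$ is closed under the multiplication of $R$; since moreover $1 = 1-0 \in G$, it follows that $G$ is a subring of $R$, and therefore itself an integral domain (commutativity, absence of zero divisors, and $1 \neq 0$ are all inherited from $R$). Because the multiplication of $R$ restricts on $S$ to the given multiplication of $S$, this exhibits the extension required by~(1).

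For $(1) \Rightarrow (2)$, the hypothesis furnishes a ring structure on $\mathcal{G}(S)$ which is an integral domain and whose multiplication, transported along the canonical additive map $\iota\colon S \to \mathcal{G}(S)$, restricts to the multiplication of $S$; here $\iota$ is injective, this being precisely the content of the word ``extends''. I would next verify that $\iota(1)$ is the multiplicative identity of $\mathcal{G}(S)$: indeed $\iota(1)\iota(s) = \iota(1\cdot s) = \iota(s)$ for all $s \in S$, and since $\mathcal{G}(S)$ is generated as a group by $\iota(S)$, bilinearity of the multiplication upgrades this to $\iota(1)g = g$ for every $g \in \mathcal{G}(S)$. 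Hence $\iota$ identifies $S$ with a subsemiring of the integral domain $\mathcal{G}(S)$, so $S$ is a semidomain.

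The genuinely routine parts — the bilinear identity, the closure of $G$ under multiplication, and the passage from ``$\iota(1)$ acts as an identity on $\iota(S)$'' to ``$\iota(1) = 1$'' — present no difficulty. The single point requiring real care is the reading of condition~(1): one must make explicit that for the multiplication of $S$ to literally extend to $\mathcal{G}(S)$ the map $S \to \mathcal{G}(S)$ must be injective, equivalently $(S,+)$ must be cancellative. Without this, the equivalence would break down (for instance, taking $S = \zz[x]\times B$ with $B$ the two-element Boolean semiring gives $\mathcal{G}(S)\cong\zz[x]$, an integral domain, while $S$ is not a semidomain since $(S,+)$ is not cancellative), so I expect the proof must flag this interpretive subtlety.
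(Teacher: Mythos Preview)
The paper does not supply its own proof of this lemma: it is quoted verbatim from \cite[Lemma~2.2]{fghp2022} with no accompanying argument. There is therefore nothing in the present paper to compare your proposal against.

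That said, your argument is correct and complete. Both implications are handled cleanly, and your explicit discussion of the injectivity of $S \to \mathcal{G}(S)$ (and the counterexample with $\zz[x]\times B$ showing what goes wrong without cancellativity) is exactly the subtlety one should record. If anything, your write-up is more careful than what one typically sees for this kind of folklore characterization.
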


Given a semidomain $S$, we let $\mathcal{F}(S)$ denote the field of fractions of $\mathcal{G}(S)$. On the other hand, we say that a semidomain $S$ is \emph{atomic} (resp., satisfies the \emph{ACCP}) if its multiplicative monoid $S^*$ is atomic (resp., satisfies the ACCP). In addition, we say that $S$ is a \emph{BFS}, \emph{FFS}, \emph{HFS}, \emph{LFS}, or \emph{UFS} provided that~$S^*$ is a BFM, FFM, HFM, LFM, or UFM, respectively. Observe that when $S$ is an integral domain, we recover the usual definitions of a UFD, a BFD, an FFD, and an HFD, which are now established notions in factorization theory.

A \emph{positive semiring}, following~\cite{BCG21}, is a subsemiring of the positive cone of $\rr$ under the standard multiplication and addition. These semirings are more tractable due to the fact that their underlying additive monoids are reduced. In~\cite{BCG21}, several examples of positive semirings are provided. It is worth noting that the class of positive semirings is contained within the class of additively reduced semidomains.

\subsection{Monoid Semirings}
\smallskip

Given a semiring $S$ and a torsion-free monoid $M$ written additively, consider the set $S[M]$ consisting of all maps $f\colon M \to S$ satisfying that the set $\{m \in M \mid f(m) \neq 0\}$ is finite. We shall conveniently represent an element $f \in S[M]$ by
\[
f = \sum_{m \in M} f(m)x^m = \sum_{i = 1}^n f(m_i)x^{m_i}\!,
\]
where the exponents $m_1, \ldots, m_n$ are the elements of $M$ whose image under $f$ is nonzero. Addition and multiplication in $S[M]$ are defined as for polynomials, and we call the elements of $S[M]$ \emph{polynomial expressions}. Under these operations, $S[M]$ is a commutative semiring, which we call the \emph{monoid semiring of $M$ over $S$} or, simply, a monoid semiring. If $S$ is a semidomain then we say that $S[M]$ is a \emph{monoid semidomain}. Observe that $S[M]$ is additively reduced provided that $S$ is additively reduced.  Since the monoid $M$ is torsion-free (and cancellative), $M$ admits a total order compatible with its monoid operation (\cite[Corollary~3.4]{rG84}). For $n \in \nn$, we say that
\[
f = s_1x^{m_1} + \cdots + s_nx^{m_n} \in S[M]^*
\] 
is written in \emph{canonical form} when $s_i \neq 0$ for every $i \in \llbracket 1,n \rrbracket$ and $m_1 > \cdots > m_n$. Observe that there is only one way to write $f$ in canonical form. As for polynomials, we call $\deg(f) \coloneqq m_1$ the \emph{degree} of $f$ and $\mathsf{c}(f) \coloneqq s_1$ the \emph{leading coefficient} of $f$. Additionally, we say that $\supp(f) \coloneqq \{m_1, \ldots, m_n\}$ is the \emph{support} of $f$, and $f$ is called a \emph{monomial} (resp., \emph{binomial}, \emph{trinomial}) if $\supp(f)$ has cardinality one (resp., two, three). 

\begin{lemma} \label{lemma: units}
	Let $S$ be a semidomain, and let $M$ be a torsion-free monoid (written additively). Then $S[M]$ is a semidomain and
	\[
	S[M]^{\times} = \left\{sx^m \,\Big\vert\, s \in S^{\times} \text{ and } \, m \in \mathcal{U}(M)\right\}.
	\]
\end{lemma}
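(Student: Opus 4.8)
The plan is to prove the two claims in sequence: first that $S[M]$ is a semidomain, and then compute its group of units. For the first claim, the natural route is to invoke Lemma~\ref{lem:characterization of integral semirings}: it suffices to show that the multiplication of $S[M]$ extends to $\mathcal{G}(S[M])$ making it an integral domain. Since $S$ is a semidomain, Lemma~\ref{lem:characterization of integral semirings} gives that $\mathcal{G}(S)$ is an integral domain $R$. Because $M$ is torsion-free, cancellative, and commutative, the monoid ring $R[M]$ (more precisely, the group ring $R[\mathcal{G}(M)]$, or the monoid ring built on $M$ inside it) is an integral domain — this is the classical result that a group ring of a torsion-free abelian group over a domain is a domain. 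One then checks that $\mathcal{G}(S[M])$ embeds into (in fact is isomorphic to, after identifying $S$ inside $R$) the subring of $R[\mathcal{G}(M)]$ generated by $S$ and $\{x^m : m \in M\}$; concretely, the canonical map $S[M] \to R[\mathcal{G}(M)]$ is injective and its image generates a subring whose Grothendieck group is the $\mathbb{Z}$-span, which sits inside the domain $R[\mathcal{G}(M)]$. Hence the multiplication extends to $\mathcal{G}(S[M])$ and turns it into a domain, so by Lemma~\ref{lem:characterization of integral semirings} again, $S[M]$ is a semidomain.

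For the second claim, I would argue by two inclusions using canonical forms. The inclusion $\supseteq$ is immediate: if $s \in S^{\times}$ with multiplicative inverse $s^{-1} \in S$ and $m \in \mathcal{U}(M)$ with additive inverse $-m \in M$, then $(sx^m)(s^{-1}x^{-m}) = 1$, so $sx^m \in S[M]^{\times}$. For $\subseteq$, suppose $fg = 1$ with $f, g \in S[M]^*$ written in canonical form $f = s_1 x^{m_1} + \cdots + s_k x^{m_k}$ and $g = t_1 x^{n_1} + \cdots + t_\ell x^{n_\ell}$, where $m_1 > \cdots > m_k$ and $n_1 > \cdots > n_\ell$ with respect to a fixed total order on $M$ compatible with its operation. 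Working inside the domain $R[\mathcal{G}(M)]$, the product $fg$ has a well-defined top term $s_1 t_1 x^{m_1 + n_1}$ (no cancellation at the top because $R$ is a domain and the total order on $M$ extends to a total order on $\mathcal{G}(M)$) and similarly a well-defined bottom term $s_k t_\ell x^{m_k + n_\ell}$. Since $fg = 1 = 1 \cdot x^0$, the top and bottom terms must coincide, forcing $m_1 + n_1 = m_k + n_\ell$; combined with $m_1 \geq m_k$ and $n_1 \geq n_\ell$ this yields $m_1 = m_k$ and $n_1 = n_\ell$, so both $f$ and $g$ are monomials, say $f = sx^m$ and $g = tx^n$. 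Then $st x^{m+n} = 1$ gives $st = 1$ in $S$ (so $s \in S^{\times}$) and $m + n = 0$ in $M$ (so $m \in \mathcal{U}(M)$), completing the inclusion.

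The main obstacle is making the "top term / bottom term" argument fully rigorous, i.e., ensuring there is genuinely no cancellation among the extreme terms of $fg$. This requires a total order on $\mathcal{G}(M)$ (not just on $M$) compatible with the group operation, which exists because $\mathcal{G}(M)$ is again torsion-free abelian, so we may invoke \cite[Corollary~3.4]{rG84} applied to $\mathcal{G}(M)$; alternatively one extends the given order on $M$ to $\mathcal{G}(M)$ directly. Once we have such an order, the coefficient of $x^{m_1 + n_1}$ in $fg$ is exactly $s_1 t_1$ (every other product $s_i t_j$ contributes to a strictly smaller exponent), and $s_1 t_1 \neq 0$ since $R$ is a domain; the bottom term is handled symmetrically. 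A secondary, routine point is verifying that $\mathcal{G}(S[M])$ really does sit inside $R[\mathcal{G}(M)]$ as claimed — this follows from the universal property of the Grothendieck group together with the injectivity of $S[M] \hookrightarrow R[\mathcal{G}(M)]$, which in turn is clear on canonical forms.
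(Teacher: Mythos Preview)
Your argument is correct, but it is more elaborate than the paper's, which dispatches both claims by citation. For the first claim the paper simply observes that $S[M]$ is a subsemiring of $\mathcal{G}(S)[M]$, and the latter is an integral domain by \cite[Theorem~8.1]{rG84} (since $M$ is cancellative, commutative, and torsion-free); this uses the \emph{definition} of a semidomain directly and avoids the detour through Lemma~\ref{lem:characterization of integral semirings} and the identification of $\mathcal{G}(S[M])$ inside $R[\mathcal{G}(M)]$. Your route works too, but note that passing to $\mathcal{G}(M)$ is unnecessary: $R[M]=\mathcal{G}(S)[M]$ already suffices, and in your top/bottom-term argument all exponents $m_i$, $n_j$, and $m_i+n_j$ lie in $M$, so the compatible total order on $M$ from \cite[Corollary~3.4]{rG84} is enough without any extension to $\mathcal{G}(M)$. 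For the second claim the paper just points to \cite[Theorem~11.1]{rG84}; your explicit leading-term/trailing-term argument is essentially a self-contained proof of that result in this setting, and is perfectly fine. In short: same mathematics, but the paper outsources the work to Gilmer while you unpack it.
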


\begin{proof}
	By virtue of \cite[Theorem~8.1]{rG84}, we have that $\mathcal{G}(S)[M]$ is an integral domain and, clearly, $S[M]$ is a subsemiring of $\mathcal{G}(S)[M]$. Hence $S[M]$ is a semidomain. The last part of our lemma is easy to verify (see, for instance, \cite[Theorem~11.1]{rG84}).
\end{proof}

\section{Atomicity and the ACCP} \label{sec:atomicity}
\smallskip

In this section, we are examining the conditions that an additively reduced monoid semidomain must satisfy to be classified as atomic and to fulfill the ACCP.

The concept of indecomposable polynomials is a crucial element in understanding the ascent of atomicity from a semidomain $S$ to its polynomial extension $S[x]$ (see \cite[Theorem~3.1]{fghp2022}). We define a polynomial in $S[x]$ to be \emph{indecomposable} if it cannot be factored as a product of two non-constant polynomials in $S[x]$. This notion has been studied in previous works \cite{fghp2022, mR93}. We now extend the notion of indecomposability to the context of monoid semidomains.

\begin{definition} \label{def: monolithic}
	Given a monoid semidomain $S[M]$, we say that a nonzero polynomial expression $f \in S[M]$ is \emph{monolithic} if $f = gh$ implies that either $g$ or $h$ is a monomial in $S[M]$. 
\end{definition}

While there are monolithic polynomials that are not indecomposable (e.g., $x^2 + x^3$ as an element of $\nn_0[x]$), indecomposable polynomials are clearly monolithic. The subsequent lemma provides insight into the significance of monolithic polynomial expressions.

\begin{lemma} \label{lemma: factors into monolithics}
	Let $S$ be an additively reduced semidomain, and let $M$ be a torsion-free monoid. Every nonzero nonunit polynomial expression in $S[M]$ factors into monolithic polynomial expressions.
\end{lemma}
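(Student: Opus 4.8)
The plan is to argue by induction on a suitable notion of "size" of a polynomial expression $f \in S[M]^*$ that is a nonunit. For a nonzero $f$ written in canonical form $f = s_1 x^{m_1} + \cdots + s_n x^{m_n}$ with $m_1 > \cdots > m_n$, I would take the pair $(n, \text{something measuring } s_1 \cdots s_n \text{ or } m_1 - m_n)$ as the induction parameter. Actually, the cleanest choice is probably to induct on $|\supp(f)| = n$ together with a secondary induction handling the monomial case via factorization in $S^*$ and $M$. So first I would dispose of the base/degenerate case: if $f$ is a monomial $s x^m$, then since $f$ is a nonunit, Lemma~\ref{lemma: units} tells us that either $s \notin S^\times$ or $m \notin \mathcal{U}(M)$; a monomial is trivially monolithic (any factorization of a monomial has both factors monomials, because $|\supp(gh)| \geq 1$ and degrees/supports behave well under the total order on $M$), so $f$ is already a product of one monolithic expression. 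Here I'd want to be slightly careful: I should check that a monomial really is monolithic — if $sx^m = gh$, comparing leading and trailing exponents under a fixed total order forces $g$ and $h$ to each be monomials.

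Next, the inductive step: suppose $f$ is a nonunit with $|\supp(f)| = n \geq 2$ and the claim holds for all nonzero nonunit polynomial expressions with strictly smaller support (and, within the same support size, we may also need a secondary parameter — see below). If $f$ is monolithic, we are done. Otherwise, by definition $f = gh$ where neither $g$ nor $h$ is a monomial. Then I claim $|\supp(g)| < |\supp(f)|$ and $|\supp(h)| < |\supp(f)|$. This is where additive reducedness of $S$ is essential: because $S$ is additively reduced (hence the underlying additive monoid has no nonzero units and, crucially, no cancellation-of-sign phenomena), there is no cancellation of leading-type terms when multiplying, so $|\supp(gh)| \geq \max(|\supp(g)|, |\supp(h)|)$, and in fact if both $g,h$ have support size $\geq 2$ then $|\supp(gh)| > |\supp(g)|$ and $> |\supp(h)|$. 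Concretely: order $M$ totally and compatibly (Corollary~3.4 of \cite{rG84}, as cited); the two largest exponents of $gh$ are $\deg(g) + \deg(h)$ and $\max$ of the next-largest combinations, and because coefficients in $S$ never add to zero, these survive — so $\supp(gh)$ contains at least $|\supp(g)| + |\supp(h)| - 1 \geq |\supp(g)| + 1$ distinct exponents. Each of $g$ and $h$ is either a unit or a nonunit: if one of them, say $g$, is a unit, then by Lemma~\ref{lemma: units} $g$ is a monomial, contradicting our choice; so both $g$ and $h$ are nonunits. By the induction hypothesis each of $g$ and $h$ factors into monolithic polynomial expressions, and concatenating these factorizations expresses $f = gh$ as a product of monolithic polynomial expressions.

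The main obstacle I anticipate is making the support-additivity claim $|\supp(gh)| \geq |\supp(g)| + |\supp(h)| - 1$ fully rigorous and isolating exactly where additive reducedness enters — this is the analogue, for monoid semidomains, of the classical fact $\deg(gh) = \deg(g) + \deg(h)$ together with the symmetric "trailing degree" statement, and it genuinely fails without the reducedness hypothesis (coefficients could cancel). I would prove it by fixing a total order $\leq$ on $M$ compatible with $+$, letting $a = \deg(g)$, $b = \deg(h)$ be the top exponents and $a'$, $b'$ the bottom exponents of $g$, $h$ respectively; then the coefficient of $x^{a+b}$ in $gh$ is $\mathsf{c}(g)\mathsf{c}(h) \neq 0$ (product of nonzero elements in the domain $\mathcal{G}(S)$) and the coefficient of $x^{a'+b'}$ is likewise nonzero, so $\deg(gh) = a + b$ and the trailing exponent is $a' + b'$; since $g$ is not a monomial, $a > a'$, so $a + b > a' + b \geq$ (trailing exponent of $gh$), and a counting argument on the "staircase" of exponents — or simply the observation that $\supp(gh) \supseteq (a + \supp(h)) \cup (\supp(g) + b')$, two sets of sizes $|\supp(h)|$ and $|\supp(g)|$ overlapping in exactly the single exponent $a + b'$... wait, that overlap claim needs the orders, let me instead just use that the number of distinct exponents in $gh$ is at least $|\supp(g)| + |\supp(h)| - 1$ by the standard argument for why a product of a $p$-term and $q$-term expression over a domain, graded by a totally ordered group, has at least $p + q - 1$ terms (the "Cauchy–Davenport"-style bound, which here is exact-or-more and requires only that the top and bottom coefficients don't vanish, guaranteed by working inside the integral domain $\mathcal{G}(S)$). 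Once that bound is in hand, the induction goes through cleanly, and the additively reduced hypothesis is used precisely to guarantee $g, h$ are genuine polynomial expressions over $S$ (not over $\mathcal{G}(S)$) with no sign cancellation, so that the support bound applies inside $S[M]$ and the factorization stays in $S[M]$.
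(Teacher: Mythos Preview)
Your proposal is correct and follows essentially the same approach as the paper: induction on $|\supp(f)|$, with the base case that monomials are monolithic, and the inductive step resting on the observation that if $f = gh$ with neither factor a monomial then $\max(|\supp(g)|, |\supp(h)|) < |\supp(f)|$ because $S$ is additively reduced. You spend more effort than the paper does justifying this support inequality (and in fact establish the sharper bound $|\supp(gh)| \geq |\supp(g)| + |\supp(h)| - 1$, which the paper uses later but not here), and you can safely drop the tentative secondary induction parameter---the single induction on support size suffices.
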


\begin{proof}
	Let $f = \sum_{i = 1}^{n} s_ix^{m_i}$ be a nonzero nonunit polynomial expression in $S[M]$ written in canonical form, so $s_i \neq 0$ for every $i \in \llbracket 1,n \rrbracket$ and $m_1 > \cdots > m_n$. We proceed by induction on $n$. If $n = 1$ then $f$ is monolithic. Suppose now that all nonzero nonunit polynomial expressions whose support have cardinality strictly less than $n \in \nn_{>1}$ factor into monolithic polynomial expressions. If $f$ is not monolithic then $f = gh$, where neither $g$ nor $h$ is a monomial. Observe that since $S$ is additively reduced, we have that $\max(\vert\supp(g)\vert, \vert\supp(h)\vert) < \vert\supp(f)\vert$, from which our argument follows inductively.
\end{proof}

Now we are in a position to provide a necessary and sufficient condition for an additively reduced monoid semidomain to be atomic.

\begin{theorem} \label{prop: atomic}
	Let $S$ be an additively reduced semidomain, and let $M$ be a torsion-free monoid (written additively). Then $S[M]$ is atomic if and only if $S$ and $M$ are both atomic and
	\[
	\mcd\,(s_1, \ldots, s_n) \times \mcd\,(m_1, \ldots, m_n) \neq \emptyset
	\]
	for any monolithic polynomial expression $f = s_1x^{m_1} + \cdots + s_nx^{m_n} \in S[M]$ written in canonical form. 
\end{theorem}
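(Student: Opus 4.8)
The plan is to prove both directions by reducing everything to the behavior of monolithic polynomial expressions, which by Lemma~\ref{lemma: factors into monolithics} generate all nonzero nonunit elements of $S[M]$ under multiplication. For the forward direction, assume $S[M]$ is atomic. First I would check that $S$ is atomic: a constant $s \in S^*$ that factors nontrivially in $S[M]$ must factor into constants (by comparing degrees/supports, or directly from Lemma~\ref{lemma: units} since nonconstant factors of a constant are impossible once $M$ is suitably ordered), so the atomic factorization of $s$ in $S[M]$ descends to one in $S$. Similarly $M$ is atomic: a monomial $x^m$ factors only as a product of monomials, and a factorization of $x^m$ into atoms of $S[M]$ yields, after stripping units via Lemma~\ref{lemma: units}, a factorization of $m$ into atoms of $M$. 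Finally, for the $\mcd$ condition, take a monolithic $f = s_1x^{m_1} + \cdots + s_nx^{m_n}$ in canonical form. Since $S[M]$ is atomic, $f$ (or rather $f$ modulo units, if $f$ is already an atom the claim is immediate) has an atomic factorization; I would argue that because $f$ is monolithic, all but one factor is a monomial, so $f = \big(\prod_j t_j x^{u_j}\big)\, a$ for some atom $a$ and monomials $t_j x^{u_j}$; collecting the monomials into a single $t x^u$ and invoking the definition of $\mcd$, one shows $t \in \mcd(s_1,\dots,s_n)$ and $u \in \mcd(m_1,\dots,m_n)$, because dividing $f$ by $tx^u$ leaves a polynomial expression whose coefficients have gcd a unit (otherwise $a$ would decompose) and similarly for exponents.

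For the converse, assume $S$ and $M$ are atomic and the $\mcd$ condition holds. By Lemma~\ref{lemma: factors into monolithics} it suffices to show every monolithic $f = s_1x^{m_1} + \cdots + s_nx^{m_n}$ (canonical form) is a product of atoms of $S[M]$. If $n = 1$, then $f = s_1 x^{m_1}$ and atomicity follows from atomicity of $S$ and of $M$ (factor $s_1$ into atoms of $S$, factor $m_1$ into atoms of $M$, and note each $a x^0$ and $1 x^u$ with $a \in \mathcal{A}(S)$, $u \in \mathcal{A}(M)$ is an atom of $S[M]$ by a degree/support argument). If $n \geq 2$, pick $t \in \mcd(s_1,\dots,s_n)$ and $u \in \mcd(m_1,\dots,m_n)$ guaranteed by hypothesis, and write $f = (tx^u) g$ where $g = \sum_i (t^{-1}s_i) x^{m_i - u}$ — here $t^{-1}s_i \in S$ since $t$ divides each $s_i$ in $S^*$, and $m_i - u \in M$ since $u$ divides each $m_i$ in $M$. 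By the defining property of $\mcd$, every gcd of the coefficients of $g$ is a unit and every gcd of the exponents of $g$ is a unit; I would then argue $g$ is an atom of $S[M]$ (it is still monolithic, being a "divisor" of the monolithic $f$ up to a monomial, and any nontrivial factorization would have to be into a monomial times something, contradicting that the coefficients and exponents of $g$ have unit gcd). Finally $tx^u$ is handled by the $n=1$ case, and concatenating gives an atomic factorization of $f$.

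The main obstacle I anticipate is the careful bookkeeping around the interplay of the two $\mcd$'s and the monolithic property — specifically, showing that once a maximal common monomial divisor $tx^u$ is pulled out, the quotient $g$ is genuinely an atom rather than merely having "small" coefficients and exponents. The subtlety is that $\mcd$ (as opposed to $\gcd$) only controls the triviality of *greatest* common divisors, so I need to combine this with monolithicity to rule out factorizations of $g$ as (monomial)$\times$(non-monomial): if $g = (t'x^{u'})h$ with $h$ non-monomial, then $t'$ is a common divisor of the coefficients of $g$ and $u'$ of the exponents of $g$, and one must deduce $t'$ and $u'$ are units, which requires knowing that $\mathrm{mcd}$ being trivial forces *every* common divisor appearing in such a split to be a unit — this is essentially the content of the definition, but it must be unwound correctly. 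A secondary technical point is justifying the support-cardinality comparisons used to show constants factor into constants and monomials into monomials; these rely on $S$ being additively reduced (so that in a product $gh$ the leading and trailing terms do not cancel), exactly as in Lemma~\ref{lemma: factors into monolithics}, and I would reuse that observation throughout.
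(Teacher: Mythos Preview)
Your approach matches the paper's almost exactly: in the forward direction you use that monomials form a divisor-closed submonoid to see $S$ and $M$ are atomic (the paper phrases this via the submonoids $N=\{sx^m : s\in S^*,\, m\in\mathcal U(M)\}$ and $H=\{sx^m : s\in S^\times,\, m\in M\}$; your phrasing about ``constants'' should really be ``monomials with invertible exponent'' when $\mathcal U(M)$ is nontrivial), and then factor a monolithic $f$ into atoms to extract an element of $\mcd(s_i)\times\mcd(m_i)$; in the reverse direction you reduce to monolithics via Lemma~\ref{lemma: factors into monolithics}, pull out an mcd monomial, and claim the quotient is an atom. The obstacle you flag---whether pulling out an mcd monomial genuinely leaves an atom---is exactly the step the paper also asserts without further comment (writing $g_j=\mcd(s_1',\dots,s_l')\,x^{\mcd(m_1',\dots,m_l')}h_j$ ``for some $h_j\in\mathcal A(S[M])$''), so your proposal is at the same level of completeness as the paper's own argument.
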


\begin{proof}
	Suppose that $S[M]$ is atomic. Observe that the multiplicative monoid $N = \{sx^m \mid s \in S^* \text{ and } m \in \mathcal{U}(M)\}$ is a divisor-closed submonoid of $S[M]^*$, so it is atomic. Since $S^*_{\text{red}} \cong N_{\text{red}}$, we can conclude that $S$ is atomic. Similarly, the multiplicative monoid $H = \{sx^m \mid s \in S^{\times} \text{ and }m \in M\}$ is a divisor-closed submonoid of $S[M]^*$, so it is atomic. This implies that $M$ is also atomic as $M_{\text{red}} \cong H_{\text{red}}$. Now let $f = s_1x^{m_1} + \cdots + s_nx^{m_n} \in S[M]$ be a monolithic polynomial expression written in canonical form. Without loss of generality, assume that $f$ is not a monomial of $S[M]$ (so, in particular, $f \not\in S[M]^{\times}$). Write $f = g_1 \cdots g_t$, where $g_j \in \mathcal{A}(S[M])$ for every $j \in \llbracket 1,t \rrbracket$. Since $f$ is monolithic, there is no loss in assuming that $g_1, \ldots, g_{t - 1}$ are all monomials. Let $s = \prod_{g_i \in N} g_i$ and $y = \prod_{g_i \not\in N} g_i$, where the empty product is considered to be equal to $1$. It is easy to see that $\mathsf{c}(s) \in \mcd(s_1, \ldots, s_n)$ and that we can write $y = s'x^mg_t$ for some $s' \in S^{\times}$ and $m \in \mcd(m_1, \ldots, m_n)$. Hence $\mcd(s_1, \ldots, s_n) \times \mcd(m_1, \ldots, m_n) \neq \emptyset$. 
	
	As for the reverse implication, let us start by noticing that if $a \in \mathcal{A}(S)$ (resp., $a \in \mathcal{A}(M)$) then $a \in \mathcal{A}(S[M])$ (resp., $x^a \in \mathcal{A}(S[M])$). Now let $f = \sum_{i = 1}^{n} s_ix^{m_i} \in S[M]$ be a nonzero nonunit element written in canonical form. Since $S$ and $M$ are both atomic, there is no loss in assuming that $n > 1$. By Lemma~\ref{lemma: factors into monolithics}, we can write $f = g_1 \cdots g_k$, where $g_j$ is monolithic for each $j \in \llbracket 1,k \rrbracket$. Now fix $j \in \llbracket 1,k \rrbracket$. Thus,
	\[
	g_j = \sum_{i = 1}^{l} s_i'x^{m_i'} =  \mcd(s_1', \ldots, s_l')x^{\mcd(m_1', \ldots, m_l')}h_j
	\]
	for some $h_j \in \mathcal{A}(S[M])$. Since $S$ and $M$ are both atomic, we have that $g_j \in \langle\mathcal{A}(S[M])\rangle$ for every $j \in \llbracket 1,k \rrbracket$. Therefore, $S[M]$ is atomic.
\end{proof}

The next result follows readily from Theorem~\ref{prop: atomic}.

\begin{cor} \label{cor: strongly atomic}
	Let $S$ be an additively reduced semidomain, and let $M$ be a torsion-free monoid. Then $S[M]^*$ is an atomic MCD-monoid if and only if $S^*$ and $M$ are both atomic MCD-monoids.
\end{cor}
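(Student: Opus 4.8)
The plan is to deduce this from Theorem~\ref{prop: atomic}. Recall that Theorem~\ref{prop: atomic} says $S[M]$ is atomic iff $S$ and $M$ are atomic and, for every monolithic $f = s_1 x^{m_1} + \cdots + s_n x^{m_n}$ written in canonical form, the product set $\mcd(s_1,\dots,s_n) \times \mcd(m_1,\dots,m_n)$ is nonempty. So the two-sided "atomic MCD-monoid" statement should follow once we show that the MCD condition on $S^*$ and $M$ captures (together with atomicity) exactly the monolithic-mcd condition in the theorem, and also accounts for $S[M]^*$ being an MCD-monoid.

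First I would handle the forward direction. Assume $S[M]^*$ is an atomic MCD-monoid. By Theorem~\ref{prop: atomic}, $S$ and $M$ are atomic; it remains to see they are MCD-monoids. Given a finite subset $\{s_1,\dots,s_n\} \subseteq S^*$, consider the element $f = s_1 + s_2 x + \cdots + s_n x^{n-1}$ of $S[M]$... wait, $M$ need not contain a copy of $\nn_0$. Instead, pick any non-unit $m \in M$ (if $M$ is a group this is vacuous and $M$ is trivially an MCD-monoid; otherwise such $m$ exists), and look at $f = s_1 + s_2 x^{m} + \cdots$. Hmm, but we need distinct exponents to keep the support from collapsing. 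The cleaner route: take $m_1 > m_2 > \cdots$ in $M$ (using the total order, e.g. $0, m, 2m, \dots$ if $m$ has infinite order, or more carefully when $M$ is a nontrivial torsion-free monoid it has elements of infinite order unless it is the trivial group) and form $f = s_1 x^{m_1} + \cdots + s_n x^{m_n}$. One checks $f$ is monolithic when, say, $M$ is reduced and the exponents are chosen generically; then a maximal common divisor of $f$ in $S[M]^*$ restricts to give a maximal common divisor of $\{s_1,\dots,s_n\}$ in $S^*$, and symmetrically (taking all $s_i$ equal to a fixed nonzero element) one extracts maximal common divisors in $M$. So $S^*$ and $M$ are MCD-monoids.

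For the reverse direction, assume $S^*$ and $M$ are atomic MCD-monoids. Then certainly $\mcd(s_1,\dots,s_n) \neq \emptyset$ and $\mcd(m_1,\dots,m_n) \neq \emptyset$ for any data, so the hypotheses of Theorem~\ref{prop: atomic} are met and $S[M]$ is atomic. To see $S[M]^*$ is an MCD-monoid, take a finite family $f_1, \dots, f_r \in S[M]^*$. I would argue that a maximal common divisor can be built as $s x^m$ where $s \in \mcd_{S^*}$ of the full collection of all coefficients appearing across the $f_j$, and $m \in \mcd_M$ of the full collection of all exponents in the supports; one then verifies that $sx^m$ divides each $f_j$ and that any greatest common divisor of $(f_1/sx^m, \dots, f_r/sx^m)$ is a unit of $S[M]$, using Lemma~\ref{lemma: units} to identify the units and the MCD properties of $S^*$ and $M$ componentwise.

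The main obstacle I anticipate is the forward direction's construction of a monolithic (or otherwise controllable) polynomial expression out of the scalar data $\{s_1,\dots,s_n\}$, because $M$ could be a nontrivial group, in which case $S[M]^*$ has no "leading coefficient" in the naive sense and $x^m$ is a unit for every $m$; there we should reduce to the fact that $S[M]^* $ having a divisor-closed submonoid isomorphic to $S^*_{\red}$ already forces $S^*_{\red}$, hence $S^*$, to be an MCD-monoid (maximal common divisors being well behaved under passage to divisor-closed submonoids and to $M/M^\times$). Making that passage rigorous — checking that "MCD-monoid" descends to divisor-closed submonoids and that $\mcd$ is compatible with the quotient by units — is the technical heart, and the rest is bookkeeping with Lemma~\ref{lemma: units} and the canonical form.
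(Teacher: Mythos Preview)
Your forward direction via the divisor-closed submonoids $N = \{sx^m : s \in S^*,\ m \in \mathcal{U}(M)\}$ and $H = \{sx^m : s \in S^\times,\ m \in M\}$ is correct and matches the mechanism the paper uses in the proof of Theorem~\ref{prop: atomic}: the MCD property descends to divisor-closed submonoids and passes between a monoid and its reduced quotient, so $S^*$ and $M$ inherit it. (The paper itself offers no argument for the corollary beyond the phrase ``follows readily from Theorem~\ref{prop: atomic}.'')

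The genuine gap is in your reverse direction. The monomial $sx^m$ you build --- with $s$ an $\mcd$ in $S^*$ of the collection of all coefficients of the $f_j$ and $m$ an $\mcd$ in $M$ of the union of their supports --- is in general \emph{not} a maximal common divisor of $\{f_1,\dots,f_r\}$, because common divisors need not be monomials. Already for the singleton $\{(x+1)^2\} = \{x^2+2x+1\}$ in $\nn_0[x]$ your recipe gives $sx^m = 1$, whereas the only $\mcd$ up to units is $(x+1)^2$ itself. One repair: first choose a common divisor $d_0$ of $\{f_1,\dots,f_r\}$ whose support has maximal cardinality (this is possible since such cardinalities are bounded by $\min_j |\supp(f_j)|$); the inequality $|\supp(gh)| \ge |\supp(g)| + |\supp(h)| - 1$, valid because $S$ is additively reduced, then forces every common divisor of $\{f_j/d_0\}$ to be a monomial, and at that point your monomial construction applied to the $f_j/d_0$ produces an $s_0 x^{m_0}$ for which $d_0\, s_0 x^{m_0}$ is a genuine $\mcd$ of $\{f_1,\dots,f_r\}$.
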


The validity of Theorem~\ref{prop: atomic} and Corollary~\ref{cor: strongly atomic} relies on the assumption that the semidomain $S$ is additively reduced. We now present an example (originally introduced in \cite{CG19}) where attempts to extend these results to general semidomains utterly fail.

\begin{example} \label{ex: Furstenberg + atomic is not necessarily Furstenberg}
	Fix a prime number $p$, and let $(p_n)_{n \in \nn}$ be a sequence consisting of all prime numbers different from $p$ ordered increasingly. Now set $M_p \coloneqq \langle p^{-n}p_n^{-1} \mid n \in \nn \rangle$, which is an additive submonoid of $(\qq_{\geq 0},+)$, and take $M = M_p \times M_p$. It is known that $M$ is an atomic torsion-free monoid (see \cite[page~146]{CG19}). In fact, it is easy to see that $\mathcal{A}(M_p) = \{p^{-n}p_n^{-1} \mid n \in \nn\}$, which implies that $M_p$ is atomic; consequently, $M$ is also atomic by \cite[Lemma~3.1(1)]{CG19}. An elementary argument can be used to verify that each nonzero element $m \in M_p$ has a unique representation in the form
	\begin{equation}\label{eq: unique representation}
		m = m' + \sum_{i = 1}^{n} \frac{c_i}{p^{i}p_i},
	\end{equation} 
	where $m' \in \qq_{\geq 0}$ with $\mathsf{d}(m') = p^k$ for some $k \in \nn_0$ and $0 \leq c_i \leq p_i - 1$ for each $i \in \llbracket 1,n \rrbracket$. Using representation~\eqref{eq: unique representation}, it is not hard to prove that $M_p$ is an MCD-monoid; we leave this task to the reader. Consequently, $M$ is an MCD-monoid. Now consider the monoid semidomain $F[M]$, where $F$ is a finite field of characteristic $p$. By \cite[Theorem~7.1]{rG84}, there is a ring isomorphism $F[x;\!M_p \times M_p] \cong F[y;\!M_p][z;\!M_p]$ induced by the assignment $x^{(a,b)} \mapsto y^az^b$. Consequently, we can write the elements of $F[M]$ as polynomial expressions in two variables. It is known that every nonunit factor of $f = y + z + yz$ in $F[M]$ has the form
	\[
	\left( y^{\frac{1}{p^k}} + z^{\frac{1}{p^k}} + y^{\frac{1}{p^k}}z^{\frac{1}{p^k}} \right)^t
	\]
	for some $k \in \nn_0$ and $t \in \nn$ (see \cite[page~146]{CG19}). Therefore, it is not only true that $F[M]$ is not atomic, but it is also the case that no atom in $F[M]$ divides $f$.
\end{example}

We now turn our attention to the ACCP, a property closely related to that of being atomic.

\begin{theorem} \label{prop: ACCP}
	Let $S$ be an additively reduced semidomain, and let $M$ be a torsion-free monoid (written additively). Then $S[M]$ satisfies the ACCP if and only if $S$ and $M$ satisfy both the ACCP.
\end{theorem}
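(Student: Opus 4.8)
The plan is to prove the two implications separately, with the forward direction being essentially immediate from divisor-closed submonoid arguments and the reverse direction requiring the real work. For the forward implication, suppose $S[M]$ satisfies the ACCP. As in the proof of Theorem~\ref{prop: atomic}, the submonoids $N = \{sx^m \mid s \in S^* \text{ and } m \in \mathcal{U}(M)\}$ and $H = \{sx^m \mid s \in S^{\times} \text{ and } m \in M\}$ are divisor-closed in $S[M]^*$; since a divisor-closed submonoid of a monoid satisfying the ACCP again satisfies the ACCP, and since $S^*_{\red} \cong N_{\red}$ while $M_{\red} \cong H_{\red}$, both $S$ and $M$ satisfy the ACCP. (An ascending chain of principal ideals in a reduced quotient lifts to one in the monoid, so passing to $M_{\red}$ is harmless.)

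For the reverse implication, assume $S$ and $M$ both satisfy the ACCP, and suppose toward a contradiction that $S[M]$ does not. Then there is a strictly ascending chain of principal ideals $f_1 S[M] \subsetneq f_2 S[M] \subsetneq \cdots$, i.e.\ a sequence $(f_k)_{k \geq 1}$ in $S[M]^*$ with $f_{k+1} \mid_{S[M]} f_k$ and $f_k \not\simeq_{S[M]} f_{k+1}$ for all $k$. Write $f_k = h_k f_{k+1}$ with $h_k \in S[M]^*$, so $f_1 = h_1 h_2 \cdots h_{k} f_{k+1}$ for every $k$, where infinitely many $h_k$ are nonunits. The key numerical invariants to track are $\lvert \supp(f_k) \rvert$, the degree (with respect to a fixed total order on $M$ compatible with the operation, which exists by~\cite[Corollary~3.4]{rG84}), and the leading coefficient. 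First I would argue that $\lvert \supp(f_k) \rvert$ is non-increasing in $k$: since $S$ is additively reduced, a product $gh$ of non-monomials has support strictly larger than each factor's support (as used in Lemma~\ref{lemma: factors into monolithics}), so in the relation $f_k = h_k f_{k+1}$ one cannot have $\lvert \supp(f_{k+1}) \rvert > \lvert \supp(f_k) \rvert$. Hence $\lvert \supp(f_k)\rvert$ stabilizes, say for all $k \geq k_0$; replacing the chain by its tail, we may assume all $f_k$ have the same support cardinality $n$. Then for $k \geq k_0$ each $h_k$ must be a monomial $s_k x^{e_k}$ with $s_k \in S^*$ and $e_k \in M$ (if some $h_k$ were a non-monomial, multiplying by it would strictly increase the support size of $f_{k+1}$, a contradiction, keeping in mind $f_{k+1}$ is itself not a monomial or the argument is even simpler).

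Now I reduce to the one-dimensional situations. Writing $f_k = s_{k,1} x^{m_{k,1}} + \cdots + s_{k,n} x^{m_{k,n}}$ in canonical form, the relation $f_k = s_k x^{e_k} f_{k+1}$ forces $\mathsf{c}(f_k) = s_k\, \mathsf{c}(f_{k+1})$ in $S^*$ and $\deg(f_k) = e_k + \deg(f_{k+1})$ in $M$. Thus $\bigl(\mathsf{c}(f_k)\bigr)_k$ gives a chain of principal ideals in $S^*$ and $\bigl(\deg(f_k)\bigr)_k$ a chain of principal ideals in $M$; since both satisfy the ACCP, there is $k_1 \geq k_0$ so that for all $k \geq k_1$ we have $s_k \in S^{\times}$ and $e_k \in \mathcal{U}(M)$. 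But then $h_k = s_k x^{e_k} \in S[M]^{\times}$ by Lemma~\ref{lemma: units}, so $f_k \simeq_{S[M]} f_{k+1}$ for all $k \geq k_1$, contradicting strict ascent. Therefore $S[M]$ satisfies the ACCP. The main obstacle I anticipate is making the monomial-factor reduction airtight, in particular handling the base case where some $f_k$ might itself be a monomial and verifying carefully that stabilization of support cardinality really does force every subsequent cofactor $h_k$ to be a monomial; the additively-reduced hypothesis is exactly what makes the support-cardinality argument work, mirroring its role in Lemma~\ref{lemma: factors into monolithics} and Theorem~\ref{prop: atomic}, and without it (as Example~\ref{ex: Furstenberg + atomic is not necessarily Furstenberg} shows for atomicity) the statement should be expected to fail.
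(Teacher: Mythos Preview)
Your proof is correct and follows essentially the same strategy as the paper's: use the support-cardinality monotonicity (coming from $S$ being additively reduced) to reduce to monomial cofactors, then invoke the ACCP on $S$ and $M$ via leading coefficients and degrees. The only cosmetic difference is that the paper packages the last step by first showing the monomial submonoid $M_2 = \{sx^m \mid s \in S^*,\, m \in M\}$ satisfies the ACCP and then contradicting that, whereas you track the two chains in $S^*$ and $M$ separately; these are equivalent bookkeeping choices.
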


\begin{proof}
	Suppose that $S[M]$ satisfies the ACCP. Since $S^*$ is a submonoid of $S[M]^*$ such that $S^{\times} = S \cap S[M]^{\times}$, we have that $S$ satisfies the ACCP. On the other hand, the multiplicative monoid $M_1 = \{x^m \mid m \in M\}$ (which is clearly isomorphic to $M$) is also a submonoid of $S[M]^*$ satisfying that $M_1^{\times} = M_1 \cap S[M]^{\times}$. Consequently, $M$ satisfies the ACCP. 
	
	Conversely, suppose that $S$ and $M$ satisfy both the ACCP, and consider the multiplicative monoid $M_2 = \{sx^m \mid s \in S^* \text{ and }m \in M\}$. Clearly, $M_2$ is a divisor-closed submonoid of $S[M]^*$. Moreover, it is easy to see that
	\[
	S^{\times} \subseteq M_2^{\times} = S[M]^{\times} = \left\{sx^m \,\Big\vert\, s \in S^{\times} \text{ and }m \in \mathcal{U}(M)\right\}\!,
	\]
	where the last equality holds by Lemma~\ref{lemma: units}. Let $(s_kx^{m_k}M_2)_{k \in \nn}$ be an ascending chain of principal ideals of $M_2$. Since the ascending chain $(s_kS)_{k \in \nn}$ of principal ideals of $S$ eventually stabilizes, there exists $n \in \nn$ such that $s_i \simeq_{S} s_n$ for every $i \geq n$. Since $S^{\times} \subseteq M_2^{\times}$, there is no loss in assuming that $s_n = s_1$ for every $n \in \nn$. Observe now that the ascending chain $(m_k + M)_{k \in \nn}$ of principal ideals of $M$ stabilizes, which implies that $(s_kx^{m_k}M_2)_{k \in \nn}$ also stabilizes. Hence $M_2$ satisfies the ACCP. By way of contradiction, assume that there exists  an ascending chain $\sigma = (f_kS[M])_{k \in \nn}$ of principal ideals of $S[M]$ such that $\sigma$ does not stabilize. If $f_t$ is a monomial for some $t \in \nn$ then $\sigma$ would stabilize because $M_2$ is a divisor-closed submonoid of $S[M]^*$ that satisfies the ACCP. As a consequence, we may assume that $f_k$ is not a monomial for any $k \in \nn$. Note that $(\vert\supp(f_k)\vert)_{k \in \nn}$ is a non-increasing sequence of natural numbers, which implies that we can also assume that $\vert\supp(f_1)\vert = \vert\supp(f_k)\vert$ for every $k \in \nn$. Hence, for every $k \in \nn$, we have $f_k = f_{k + 1}(s_{k + 1}x^{m_{k + 1}})$ for some $s_{k + 1} \in S^*$ and $m_{k + 1} \in M$. Without loss of generality, suppose that $s_{k + 1}x^{m_{k + 1}} \not\in S[M]^{\times}$ for any $k \in \nn$. Since $s_{k + 1}x^{m_{k + 1}} \not\in M_2^{\times}$ for any $k \in \nn$, we have that $\sigma^* = (\mathsf{c}(f_k)x^{\deg(f_k)}M_2)_{k \in \nn}$ is an ascending chain of ideals of $M_2$ that does not stabilize. This contradiction proves that our hypothesis is untenable. Therefore, $S[M]$ satisfies the ACCP.
\end{proof}

In Theorem~\ref{prop: ACCP}, the assumption that $S$ is additively reduced is not superfluous as $F[\mathbb{Q}]$ does not satisfy the ACCP for any field $F$ by \cite[Theorem~14.17]{rG84}. On the other hand, we can combine theorems \ref{prop: atomic} and \ref{prop: ACCP} to yield atomic semidomains that do not satisfy the ACCP as the following example illustrates. 

\begin{example} \label{ex: atomic semidomain that does not satisfy the ACCP}
	Take $r \in \qq \cap (0,1)$ with $\mathsf{n}(r) \ge 2$, and consider the additive monoid $S_r \coloneqq \langle r^n \mid n \in \nn_0 \rangle$. By \cite[Corollary~4.4]{CGG20}, the monoid $S_r$ is atomic and does not satisfy the ACCP. Moreover, it was argued in  \cite[Example~3.2]{fghp2022} that $S_r$ is an MCD-monoid. By theorems \ref{prop: atomic} and \ref{prop: ACCP}, the semidomain $\nn_0[S_r]$ is atomic and does not satisfy the ACCP.
\end{example}

\begin{remark}
	Given an additive submonoid $M$ of $\qq_{\geq 0}$, consider the additive monoid $E(M) \coloneqq \langle e^m \mid m \in M \rangle$, which is free on the set $\{e^m \mid m \in M\}$ by the Lindemann-Weierstrass Theorem stating that, for distinct algebraic numbers $\alpha_1, \ldots, \alpha_n$, the set $\{e^{\alpha_1}, \ldots, e^{\alpha_n}\}$ is linearly independent over the algebraic numbers. Observe that $E(M)$ is closed under multiplication and, consequently, it is a positive semiring. This construction has been used in the literature to construct semidomains with prescribed factorization properties (see, for instance, \cite[Example~4.15]{BG20} and \cite[Proposition~4.1]{BCG21}). Observe that the arithmetic of positive semirings of the form $E(M)$ can be better understood in the scope of the present paper since $E(M) \cong \nn[M]$ (as semirings).
\end{remark}

\section{The Bounded and Finite Factorization Properties} \label{sec: bounded and finite fact properties}
\smallskip

This section is devoted to the study of the bounded and finite factorization properties in the context of additively reduced monoid semidomains. We start with a well-known and useful characterization of BFMs.

\begin{definition}
	Given a monoid $M$, a function $\ell\colon M \to \nn_0$ is a \emph{length function} of $M$ if it satisfies the following two properties:
	\begin{enumerate}
		\item[(i)] $\ell(u) = 0$ if and only if $u \in M^{\times}$;
		
		\item[(ii)] $\ell(bc) \geq \ell(b) + \ell(c)$ for every $b,c \in M$.
	\end{enumerate}
\end{definition} 

The following result is well known.

\begin{prop} \cite[Theorem~1]{fHK92} \label{prop: length function if and only if BFM}
	A monoid $M$ is a BFM if and only if there is a length function $\ell\colon M \to \nn_0$.
\end{prop}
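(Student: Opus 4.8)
The plan is to prove both directions of the equivalence directly from the definitions.

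\medskip

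\textbf{($\Leftarrow$) Suppose a length function $\ell\colon M \to \nn_0$ exists.} First I would fix an arbitrary $b \in M$ and show that $\mathsf{L}(b)$ is finite; since $M^{\times}$ contributes only length $0$ to factorizations and $\ell$ vanishes exactly on units, it suffices to bound the length of factorizations of $b$ into atoms. If $z = a_1 \cdots a_\ell \in \mathsf{Z}(b)$ with each $a_i \in \mathcal{A}(M_{\red})$, then iterating property (ii) gives $\ell(b) \geq \ell(a_1) + \cdots + \ell(a_\ell)$ (up to associates, which does not affect $\ell$ since $\ell$ vanishes on units and is hence constant on associate classes — I would note this as a small preliminary observation using (ii) applied to $b = (bu)u^{-1}$). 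Because each $a_i$ is an atom and hence a nonunit, property (i) yields $\ell(a_i) \geq 1$, so $\ell \leq \ell(b)$. Thus every length in $\mathsf{L}(b)$ is at most $\ell(b)$, so $\mathsf{L}(b) \subseteq \llbracket 0, \ell(b) \rrbracket$ is finite, and $M$ is a BFM. (Implicitly this also shows $M$ is atomic, which is part of being a BFM in the bounded-factorization sense; but since Proposition~\ref{prop: length function if and only if BFM} as stated concerns only the finiteness of $\mathsf{L}(b)$ for all $b$, I would just remark that atomicity follows from the ACCP, which in turn follows once $\mathsf{L}(b)$ is finite — or cite the standard argument.)

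\medskip

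\textbf{($\Rightarrow$) Suppose $M$ is a BFM.} The natural candidate for a length function is $\ell(b) \coloneqq \sup \mathsf{L}(b)$, which lands in $\nn_0$ precisely because $\mathsf{L}(b)$ is finite (and nonempty, as $M$ is atomic being a BFM). I would verify the two axioms. For (i): if $b \in M^{\times}$ then its only factorization is the empty one, so $\mathsf{L}(b) = \{0\}$ and $\ell(b) = 0$; conversely if $b \notin M^{\times}$ then $b$ has at least one atomic factorization of length $\geq 1$, so $\ell(b) \geq 1$. For (ii): given $b, c \in M$, pick factorizations $z_b \in \mathsf{Z}(b)$ and $z_c \in \mathsf{Z}(c)$ with $|z_b| = \ell(b)$ and $|z_c| = \ell(c)$; concatenating them gives a factorization $z_b z_c \in \mathsf{Z}(bc)$ of length $\ell(b) + \ell(c)$, whence $\ell(bc) = \sup \mathsf{L}(bc) \geq \ell(b) + \ell(c)$.

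\medskip

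I do not expect a serious obstacle here — this is a standard and short argument. The only minor points to be careful about are: (a) that $\ell$ is well-defined on $M$ rather than on $M_{\red}$, which is fine since $\mathsf{L}_M(b)$ was defined via $\pi^{-1}(b\,\mathcal{U}(M))$ and so is automatically invariant under associates and units; (b) the degenerate cases (units, the empty factorization) in checking axiom (i); and (c) in the forward direction one should make sure that the existence of factorizations is available — but $M$ being a BFM already entails atomicity (every BFM satisfies the ACCP and hence is atomic, as noted in the background section), so $\mathsf{Z}(b) \neq \emptyset$ for every $b$. With these caveats handled, both implications go through cleanly.
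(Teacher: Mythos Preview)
The paper does not prove this proposition; it is simply quoted as \cite[Theorem~1]{fHK92}, so there is no ``paper's own proof'' to compare against. Your argument is the standard one and is correct. Two small remarks: (a) you overload the symbol $\ell$ as both the length function and the length of the factorization $z = a_1 \cdots a_\ell$, which makes ``$\ell \le \ell(b)$'' read awkwardly --- rename the latter to $k$ or use $|z|$; (b) your parenthetical about atomicity is slightly circular as phrased: the clean route is that the existence of a length function directly forces the ACCP (a strict inclusion $bM \subsetneq cM$ implies $\ell(b) > \ell(c)$), and ACCP implies atomicity, so $\mathsf{Z}(b) \neq \emptyset$ and the bound $|z| \le \ell(b)$ then gives BFM.
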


We are now in a position to characterize the additively reduced monoid semidomains that are BFSs.

\begin{theorem} \label{theorem: BFS}
	Let $S$ be an additively reduced semidomain, and let $M$ be a torsion-free monoid. Then $S[M]$ is a BFS if and only if $S$ is a BFS and $M$ is a BFM.
\end{theorem}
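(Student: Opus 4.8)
The plan is to prove both implications by transferring length functions between $S[M]$, the base semidomain $S$, and the monoid $M$, using Proposition~\ref{prop: length function if and only if BFM} as the bridge. For the forward implication, suppose $S[M]$ is a BFS. Then $S[M]^*$ is a BFM, so it admits a length function $\ell\colon S[M]^* \to \nn_0$. I would restrict $\ell$ to the divisor-closed submonoid $M_2 = \{sx^m \mid s \in S^* \text{ and } m \in M\}$ of $S[M]^*$ (already used in the proof of Theorem~\ref{prop: ACCP}); since $M_2^\times = S[M]^\times$ by Lemma~\ref{lemma: units}, the restriction is still a length function on $M_2$. Composing with the natural embeddings $S^* \hookrightarrow M_2$ (via $s \mapsto s$) and $M \cong \{x^m \mid m\in M\} \hookrightarrow M_2$ (via $m \mapsto x^m$) yields length functions on $S^*$ and on $M$ respectively, modulo checking that condition~(i) survives — this holds because $S^\times = S \cap S[M]^\times$ and $\{x^m : m \in \mathcal U(M)\} = \{x^m : m \in M\} \cap S[M]^\times$. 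Hence $S$ is a BFS and $M$ is a BFM.

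For the converse, suppose $S$ is a BFS and $M$ is a BFM. Let $\ell_S\colon S^* \to \nn_0$ and $\ell_M\colon M \to \nn_0$ be length functions provided by Proposition~\ref{prop: length function if and only if BFM}. The idea is to build a length function $\ell$ on $S[M]^*$ out of three ingredients: the number of terms, the length of the leading coefficient, and the length of the degree. Concretely I would try
\[
\ell(f) \coloneqq \bigl(\lvert \supp(f)\rvert - 1\bigr) + \ell_S\bigl(\mathsf{c}(f)\bigr) + \ell_M\bigl(\deg(f)\bigr).
\]
Condition~(i): if $f \in S[M]^\times$ then by Lemma~\ref{lemma: units} $f = sx^m$ with $s \in S^\times$, $m \in \mathcal U(M)$, so all three summands vanish; conversely if $\ell(f) = 0$ then $\lvert\supp(f)\rvert = 1$, $\mathsf{c}(f) \in S^\times$, and $\deg(f) \in \mathcal U(M)$, again forcing $f \in S[M]^\times$. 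Condition~(ii): for $f = gh$, one uses $\deg(f) = \deg(g) + \deg(h)$ and $\mathsf c(f) = \mathsf c(g)\mathsf c(h)$ (valid in $\mathcal G(S)[M]$, hence in $S[M]$), together with the superadditivity of $\ell_S$ and $\ell_M$, to control the last two summands; and one needs $\lvert\supp(f)\rvert - 1 \ge (\lvert\supp(g)\rvert - 1) + (\lvert\supp(h)\rvert - 1)$, i.e. $\lvert\supp(gh)\rvert \ge \lvert\supp(g)\rvert + \lvert\supp(h)\rvert - 1$.

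The main obstacle is precisely this last inequality on supports, since it can fail when coefficients cancel under addition — but here is where additive reducedness of $S$ is essential: in $\mathcal G(S)[M]$ the product $gh$ has a well-defined leading term $\mathsf c(g)\mathsf c(h)x^{\deg g + \deg h}$ and a well-defined trailing (lowest-degree) term, and because $S$ has no nonzero additive inverses, summing a nonempty set of nonzero elements of $S$ can never produce $0$; thus no coefficient of $gh$ that receives at least one contribution can vanish. A clean way to extract the bound is to order $M$ with a compatible total order (available by \cite[Corollary~3.4]{rG84}) and observe that if $\supp(g) = \{a_1 > \cdots > a_k\}$ and $\supp(h) = \{b_1 > \cdots > b_\ell\}$, then the sums $a_1 + b_1 > a_1 + b_2 > \cdots > a_1 + b_\ell > a_2 + b_\ell > \cdots > a_k + b_\ell$ are $k + \ell - 1$ distinct elements of $M$, each of which lies in $\supp(gh)$ because the corresponding coefficient is (by additive reducedness) a sum of nonzero elements of $S$ and hence nonzero. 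This yields $\lvert\supp(gh)\rvert \ge \lvert\supp(g)\rvert + \lvert\supp(h)\rvert - 1$, completing the verification that $\ell$ is a length function and hence, by Proposition~\ref{prop: length function if and only if BFM}, that $S[M]$ is a BFS.
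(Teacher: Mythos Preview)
Your proposal is correct and follows essentially the same route as the paper: the converse uses exactly the same length function $\ell(f) = \ell_S(\mathsf{c}(f)) + \ell_M(\deg(f)) + \lvert\supp(f)\rvert - 1$ and the same support inequality (for which you supply a fuller justification than the paper does), while the forward direction differs only cosmetically---you restrict a length function directly through $S^* \hookrightarrow S[M]^*$ and $m \mapsto x^m$, whereas the paper passes through the auxiliary submonoids $N$ and $H$ and invokes \cite[Corollary~1.3.3]{GH06}, which amounts to the same thing.
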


\begin{proof}
	Suppose that $S[M]$ is a BFS. As before, consider the multiplicative submonoid $N = \{sx^m \mid s \in S^* \text{ and } m \in \mathcal{U}(M)\}$ of $S[M]^*$ whose reduced monoid is isomorphic to $S^*_{\text{red}}$. By Lemma~\ref{lemma: units}, we have that $N^{\times} = N \cap S[M]^{\times}$. Consequently, the monoid $N$ is a BFM by virtue of \cite[Corollary~1.3.3]{GH06} which, in turn, implies that $S$ is a BFS. Similarly, the multiplicative submonoid $H = \{sx^m \mid s \in S^{\times} \text{ and }m \in M\}$ of $S[M]$ is a BFM as $H^{\times} = H \cap S[M]^{\times}$. Since $M_{\text{red}} \cong H_{\text{red}}$, we have that $M$ is a BFM. 
	
	Conversely, suppose that $S$ is a BFS and $M$ is a BFM. Then there exist length functions $\ell_c \colon S^* \to \nn_0$ and $\ell_e \colon M \to \nn_0$. Let us argue that the function $\ell \colon S[M]^* \to \nn_0$ given by
	\[
	\ell\left(f\right) = \ell_c\left(\mathsf{c}\left(f\right)\right) + \ell_e \left(\deg\left(f\right)\right) +\vert\,\supp\left(f\right)\vert - 1
	\]
	is also a length function. By Lemma~\ref{lemma: units}, we have that $f \in S[M]^*$ is a unit if and only if $f = sx^m$, where $s \in S^{\times}$ and $m \in \mathcal{U}(M)$. Hence $\ell(f) = 0$ if and only if $f \in S[M]^{\times}$ as the reader can easily verify. For $f, g \in S[M]^*$ we see that
	\begin{equation*}
		\begin{split}
			\ell(fg) & = \ell_c(\mathsf{c}(fg)) + \ell_e(\deg(fg)) + \left\vert\supp\left(fg\right)\right\vert - 1\\
			& \ge \ell_c(\mathsf{c}(f)) + \ell_c(\mathsf{c}(g)) + \ell_e(\deg(f)) + \ell_e(\deg(g)) + \left\vert\supp\left(f\right)\right\vert + \left\vert\supp\left(g\right)\right\vert - 2\\
			& = \ell(f) + \ell(g),
		\end{split}
	\end{equation*}
	where the inequality follows from $\ell_c$ and $\ell_e$ being both length functions along with the fact that the inequality $\vert\supp(fg)\vert \ge \vert\supp(f)\vert + \vert\supp(g)\vert - 1$ holds. Therefore, the map $\ell$ is a length function, which implies that $S[M]$ is a BFS by Proposition~\ref{prop: length function if and only if BFM}. 
\end{proof}

From the corresponding definitions, we see that an additively reduced FFS is a BFS. However, there are numerous examples in the literature illustrating that the reverse implication fails (e.g., \cite[Example~4.5]{fghp2022}). We now provide a new example of an additively reduced BFS that is not an FFS. 

\begin{example}
	The semidomain $S = \nn_0 \cup \qq_{\geq 2}$ is a BFS that is not an FFS (see \cite[Example~6.4]{BCG21}). Consider the additively reduced semidomain $R = S[x]$. By Theorem~\ref{theorem: BFS}, we have that $R$ is a BFS. However, since $R^{\times} = S^{\times}$ (Lemma~\ref{lemma: units}), $R$ is not an FFS by virtue of \cite[Theorem~1.5.6]{GH06}.
\end{example}

For the rest of the section, we focus on the finite factorization property.

\begin{definition}
	Let $g$ be an element of a torsion-free abelian group $G$ (which is additively written), and let $N_g$ be the set of positive integers $n$ such that the equation $nx = g$ has a solution in $G$. We say that $g \in G$ is \emph{of type $(0, 0, \ldots)$} provided that $N_g$ is finite. In addition, we say that $g \in G$ is \emph{of height $(0,0, \ldots)$} if $N_g$ is a singleton (i.e., $N_g = \{1\}$).
\end{definition}

\begin{theorem} \label{prop: FFS}
	Let $S$ be an additively reduced semidomain, and let $M$ be a torsion-free monoid. Then $S[M]$ is an FFS if and only if $S$ is an FFS and $M$ is an FFM. 
\end{theorem}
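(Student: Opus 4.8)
\emph{Plan overview and the forward implication.} The plan is to prove both implications by the same device used for Theorem~\ref{theorem: BFS}: push the finite factorization property onto the two divisor-closed submonoids of $S[M]^*$ attached to $S$ and to $M$ for one direction, and reassemble finiteness of factorizations in $S[M]$ from those of $S$ and $M$ for the other. For the forward implication, assume $S[M]$ is an FFS and, as in the proofs of Theorems~\ref{prop: atomic} and~\ref{theorem: BFS}, put $N = \{sx^m : s \in S^*,\, m \in \mathcal{U}(M)\}$ and $H = \{sx^m : s \in S^{\times},\, m \in M\}$. Since $\mathcal{G}(S)[M]$ is an integral domain, supports are submultiplicative, so a divisor of a monomial of $S[M]$ is again a monomial; together with the facts that a coefficient dividing an element of $S^*$ lies in $S^*$ and that an exponent which additively ``divides'' a unit of $M$ is itself a unit of $M$, this shows that $N$ and $H$ are \emph{divisor-closed} submonoids of $S[M]^*$. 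For a divisor-closed submonoid the set of factorizations of any element is the same computed inside it or inside the ambient monoid, so a divisor-closed submonoid of an FFM is an FFM; hence $N$ and $H$ are FFMs. Because $N_{\red} \cong S^*_{\red}$ and $H_{\red} \cong M_{\red}$ (the latter via Lemma~\ref{lemma: units}) and the FF property passes between a monoid and its reduction, $S$ is an FFS and $M$ is an FFM.

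\emph{The converse: reduction.} Assume now that $S$ is an FFS and $M$ is an FFM. Every FFS is a BFS and every FFM is a BFM, so Theorem~\ref{theorem: BFS} gives that $S[M]$ is a BFS; hence $S[M]$ satisfies the ACCP, is atomic, and $S[M]^*$ is a BFM. By the standard characterization of finite factorization monoids \cite[Theorem~1.5.6]{GH06}, it then suffices to prove that every $f \in S[M]^*$ has only finitely many divisors in $S[M]^*$ up to associates.

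\emph{The converse: the cheap bounds.} Fix $f = s_1 x^{m_1} + \cdots + s_n x^{m_n} \in S[M]^*$ in canonical form and let $g \mid_{S[M]} f$ with cofactor $h$. Submultiplicativity of supports forces $\lvert \supp(g) \rvert \le n$. The leading-coefficient map and the lowest-coefficient map are monoid homomorphisms $S[M]^* \to S^*$, while the degree map and the order map $\omega \coloneqq \min \supp$ are monoid homomorphisms $S[M]^* \to (M,+)$; hence $\mathsf{c}(g)$ and the lowest coefficient of $g$ divide $s_1$ and $s_n$ in $S^*$, and $\deg g$ and $\omega g$ divide $m_1$ and $m_n$ in $M$. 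Since $S$ is an FFS and $M$ is an FFM, each of these invariants lies in a finite set modulo associates, and — using $S[M]^{\times} = \{ux^v : u \in S^{\times},\, v \in \mathcal{U}(M)\}$ from Lemma~\ref{lemma: units} — after replacing $g$ by a suitable associate we may take $\mathsf{c}(g)$ and $\deg g$ prescribed, so the tuple $\bigl(\mathsf{c}(g), \text{lowest coeff of }g, \deg g, \omega g\bigr)$ takes only finitely many values. Moreover the augmentation homomorphism $\varepsilon \colon S[M] \to S$, $x^m \mapsto 1$, is a semiring homomorphism, and since $S$ is additively reduced a sum of nonzero elements of $S$ is nonzero, so $\varepsilon(g) \in S^*$ and $\varepsilon(g) \mid_S \varepsilon(f)$, confining $\varepsilon(g)$ to a finite set modulo associates as well. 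These remarks already dispatch the case where $g$ or $h$ is a monomial: if $g = a x^p$ then $a$ is a common divisor of the coefficients of $f$ and $p$ divides $\omega f$ in $M$, and if $h$ is a monomial then $g$ is determined up to associates by $h$, which is the previous case.

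\emph{The crux.} What remains — and where I expect essentially all the work to lie — is to bound the \emph{interior} coefficients of a divisor $g$ of $f$ when both $g$ and $h$ have support of cardinality at least two. Here I would induct on $\lvert \supp f \rvert$, the base case being the monomial case above; for the inductive step one peels the leading term off $h$ via the identity $f - \mathsf{c}(h)x^{\deg h} g = g\bigl(h - \mathsf{c}(h)x^{\deg h}\bigr)$, whose right-hand factor $h - \mathsf{c}(h)x^{\deg h}$ has strictly smaller support, and one must then combine this with the constraints already obtained — the augmentation $\varepsilon$ governing the ``$\nn_0$-like'' part of the coefficients, the leading/lowest-coefficient and degree/order data pinning the boundary of $g$, and the inductive hypothesis handling what is left — so that the set of possible $g$ stays finite. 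This interlocking bookkeeping is the main obstacle. (When $\mathcal{G}(S)[M]$ is independently known to be an FFD one can instead pull the finiteness of the divisors of $f$ in $S[M]$ down from that of its divisors in $\mathcal{G}(S)[M]$, but this still needs separately that $\mathcal{G}(S)$ is an FFD and that each $\mathcal{G}(S)[M]$-associate class of divisors of $f$ meets $S[M]$ in only finitely many $\simeq_{S[M]}$-classes, so I would prefer the direct inductive route.)
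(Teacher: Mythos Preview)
Your forward implication is fine and essentially matches the paper. The converse, however, has a real gap at the ``crux,'' and it is not just bookkeeping. The induction on $\vert\supp f\vert$ via $f' \coloneqq f - \mathsf{c}(h)x^{\deg h}g = g\bigl(h - \mathsf{c}(h)x^{\deg h}\bigr)$ fails for a structural reason: the auxiliary element $f'$ depends on the particular divisor $g$ (through its cofactor $h$), not only on $f$. Even granting $\vert\supp f'\vert < \vert\supp f\vert$, the inductive hypothesis only says that each individual $f'_g$ has finitely many divisors up to associates; since there is no a priori bound on how many distinct $f'_g$ arise as $g$ varies, you obtain no global bound on the set of possible $g$. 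The boundary invariants you collect (leading and lowest coefficient, degree and order, support size, augmentation) constrain $g$ only at its two extremal monomials and say nothing about the interior coefficients, so the peeling step does not close.

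The paper's converse proceeds differently. Arguing by contradiction, one takes an infinite family of pairwise non-associate divisors $g^{(k)}$ of a fixed $f$ and, using pigeonhole together with $M$ being an FFM and $S$ being an FFS, arranges that all $g^{(k)}$ have the same support up to a translation by some $u_k \in \mathcal{U}(M)$ and the same leading coefficient. One then passes to the \emph{finitely generated} submonoid $M' \subseteq M$ generated by $\supp(f)\cup\supp(g^{(1)})\cup\supp(h^{(1)})$ and embeds into $F[M']$ for a field $F$ containing $S$. The decisive external input is that $F[M']$ is an FFD (every nonzero element of $\mathcal{G}(M')$ is of type $(0,0,\ldots)$; see \cite[Proposition~3.24]{kim}). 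Two of the normalized divisors $x^{-u_k}g^{(k)}$ must then be associates in $F[M']$, and since they share degree and leading coefficient they are in fact equal, contradicting non-associateness in $S[M]$. What you are missing is precisely this passage to finitely generated exponents and field coefficients, where the FFD property is available; your alternative via $\mathcal{G}(S)[M]$ stalls for the reason you yourself name --- one does not know $\mathcal{G}(S)$ is an FFD --- and the paper's device of \emph{enlarging} the coefficient ring to a field while \emph{shrinking} the exponent monoid to a finitely generated one is exactly what rescues the argument.
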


\begin{proof}
	Suppose that $S[M]$ is an FFS. Again, consider the multiplicative submonoid $N = \{sx^m \mid s \in S^* \text{ and } m \in \mathcal{U}(M)\}$ of $S[M]^*$. Since $N^{\times} = S[M]^{\times}$\!, the monoid $N$ is an FFM by \cite[Theorem~1.5.6]{GH06} which, in turn, implies that $S^*$ is an FFM as $S^*_{\text{red}} \cong N_{\text{red}}$. Similarly, consider the multiplicative submonoid $H = \{sx^m \mid s \in S^{\times} \text{ and } m \in M\}$ of $S[M]^*$. Since $H^{\times} = S[M]^{\times}$, the monoid $H$ is an FFM. From the fact that $M_{\text{red}} \cong H_{\text{red}}$, we conclude that $M$ is also an FFM.
	
	To tackle the reverse implication, suppose that $S^*$ and $M$ are both FFMs. For the rest of the proof, we assume that a polynomial expression in $S[M]$ is always written in canonical form. By way of contradiction, assume that $S[M]$ is not an FFS. By \cite[Proposition~1.5.5]{GH06}, there exists $f \coloneqq \sum_{i = 0}^n s_ix^{m_i} \in S[M]^*$ such that $f$ has infinitely many divisors in $S[M]$ that are pairwise non-associates. Let $g \coloneqq \sum_{j = 0}^{t} s_j'x^{m_j'}$ be an arbitrary divisor of $f$ in $S[M]$. Observe that, for every $j \in \llbracket 0,t \rrbracket$, there exists $i \in \llbracket 0,n \rrbracket$ such that $m_j' \mid_M m_i$. Moreover, the inequality $t \leq n$ holds. Since $M$ is an FFM, for some $r \in \llbracket 0,n \rrbracket$, there exists a sequence
	\[
	\sigma = \left( g^{(k)} \coloneqq \sum_{\ell = 0}^{r} s_{\ell}^{(k)}x^{m_\ell^{(k)}} \right)_{k \in \nn}
	\]
	of pairwise non-associates divisors of $f$ in $S[M]$ satisfying that $m_\ell^{(k)} \simeq_{M} m_{\ell}^{(1)}$ for every $k \in \nn$ and every $\ell \in\llbracket 0,r \rrbracket$. Since $S^*$ is also an FFM, we may assume that $s_0^{(k)} \simeq_{S} s_0^{(1)}$ for every $k \in \nn$; because the elements of $\sigma$ are pairwise non-associates, we may further assume that $s_0^{(k)} = s_0^{(1)}$ for every $k \in \nn$. Now let
	\[
	\gamma = \left( h^{(k)} \coloneqq \sum_{\ell = 0}^{t_k} c_{\ell}^{(k)}x^{o_{\ell}^{(k)}} \right)_{k \in \nn}
	\]
	such that $f = g^{(k)}h^{(k)}$ for every $k \in \nn$. Since the underlying set of the sequence $\gamma$ is an infinite subset of divisors of $f$ in $S[M]$ that are pairwise non-associates, we can assume without loss of generality that $t_k = t_1$ (from now on we refer to this quantity as $t$) and $o_\ell^{(k)} \simeq_{M} o_\ell^{(1)}$ for every $k \in \nn$ and every $\ell \in \llbracket 0,t \rrbracket$. Clearly, we have $c_0^{(k)} = c_0^{(1)}$ for each $k \in \nn$. Also, there is no loss in assuming that the equality $m_i^{(k)} + o_j^{(k)} = m_i^{(1)} + o_j^{(1)}$ holds for every $k \in \nn$, each $i \in \llbracket 0,r \rrbracket$, and each $j \in \llbracket 0,t \rrbracket$. Hence, for each $k \in \nn$, there exists $u_k \in \mathcal{U}(M)$ such that the following equalities hold:
	\begin{equation*}
		\begin{split}
			\supp\left(g^{(k)}\right) &= 	\left\{ m + u_k \,\bigg\vert\, m \in  \supp\left(g^{(1)}\right)\right\} \\  \supp\left(h^{(k)}\right) &= \left\{ m - u_k \,\bigg\vert\, m \in \supp\left(h^{(1)}\right) \right\}.
		\end{split}
	\end{equation*}
	Thus $\sigma^* = ( x^{-u_k}g^{(k)})_{k \in \nn}$ is a sequence of pairwise non-associates divisors of $f$ in $S[M]$ with the same support and leading coefficient, namely $\supp(g^{(1)})$ and $\mathsf{c}(g^{(1)})$, respectively. Let $M'$ be the submonoid of $M$ generated by the set $\supp(f) \cup \supp(g^{(1)}) \cup \supp(h^{(1)})$, and consider the monoid domain $R = F[M']$, where $F$ is a field containing $S$. Since every nonzero element of the Grothendieck group of a finitely generated torsion-free (cancellative) monoid is of type $(0,0, \ldots)$, we have that $R$ is an FFD by virtue of \cite[Proposition~3.24]{kim}. Observe that $f$, $x^{-u_k}g^{(k)}$, and $x^{u_k}h^{(k)}$ are elements of $R$ for every $k \in \nn$, which implies that there exist $i,j \in \nn$ with $i \neq j$ such that $x^{-u_i}g^{(i)} \simeq_{R} x^{-u_j}g^{(j)}$. Since $\deg(x^{-u_i}g^{(i)}) = \deg( x^{-u_j}g^{(j)})$ and $\mathsf{c}(x^{-u_i}g^{(i)}) = \mathsf{c}(x^{-u_j}g^{(j)})$, we obtain that $x^{-u_i}g^{(i)} = x^{-u_j}g^{(j)}$. This contradicts the fact that $g^{(i)}$ and $g^{(j)}$ are not associates in $S[M]$. Therefore, $S[M]$ is an FFS.
\end{proof}

A monoid $M$ is called a \emph{strong finite factorization monoid} (\emph{SFFM}) if each nonzero element of $M$ has only finitely many divisors (counting associates)\footnote{The notion of strong finite factorization was introduced by Anderson and Mullins in \cite{AM96}.}. It is easy to see that a monoid $M$ is an SFFM if and only if it is an FFM and $\vert M^{\times}\vert < \infty$. Following this definition, we say that a semidomain $S$ is a \emph{strong finite factorization semidomain} (\emph{SFFS}) provided that $S^*$ is an SFFM. In Lemma~\ref{lemma: units}, we established that, for a semidomain $S$ and a torsion-free monoid $M$ (which is written additively), the inequality $\vert S[M]^{\times}\vert < \infty$ holds if and only if $\vert S^{\times}\vert < \infty$ and $\vert\mathcal{U}(M)\vert < \infty$. We can easily deduce the following result as a direct corollary of Theorem~\ref{prop: FFS}.

\begin{cor}
	Let $S$ be an additively reduced semidomain, and let $M$ be a torsion-free monoid. Then $S[M]$ is an SFFS if and only if $S$ is an SFFS and $M$ is an SFFM. 
\end{cor}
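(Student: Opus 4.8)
The plan is to reduce everything to Theorem~\ref{prop: FFS} together with the two observations recorded just above the statement: first, that a monoid $N$ is an SFFM precisely when it is an FFM with $\lvert N^{\times}\rvert < \infty$; and second, that $\lvert S[M]^{\times}\rvert < \infty$ if and only if $\lvert S^{\times}\rvert < \infty$ and $\lvert \mathcal{U}(M)\rvert < \infty$ (which follows from Lemma~\ref{lemma: units}).

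First I would unwind the definitions: saying that $S[M]$ is an SFFS means that $S[M]^*$ is an SFFM, which by the first observation means that $S[M]^*$ is an FFM and $\lvert S[M]^{\times}\rvert < \infty$. By Theorem~\ref{prop: FFS}, the condition that $S[M]^*$ be an FFM is equivalent to $S$ being an FFS and $M$ being an FFM. By the second observation, $\lvert S[M]^{\times}\rvert < \infty$ is equivalent to $\lvert S^{\times}\rvert < \infty$ and $\lvert \mathcal{U}(M)\rvert < \infty$. Combining these, $S[M]$ is an SFFS if and only if $S$ is an FFS with $\lvert S^{\times}\rvert < \infty$ and $M$ is an FFM with $\lvert \mathcal{U}(M)\rvert < \infty$.

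Finally I would invoke the first observation once more, this time applied to the monoids $S^*$ and $M$ individually: that $S^*$ is an FFM with $\lvert (S^*)^{\times}\rvert = \lvert S^{\times}\rvert < \infty$ is exactly the assertion that $S^*$ is an SFFM, i.e.\ that $S$ is an SFFS; likewise, that $M$ is an FFM with $\lvert M^{\times}\rvert = \lvert \mathcal{U}(M)\rvert < \infty$ is exactly the assertion that $M$ is an SFFM. This yields the claimed equivalence. I do not expect any genuine obstacle here: the corollary is a bookkeeping consequence of Theorem~\ref{prop: FFS} and the explicit description of $S[M]^{\times}$ in Lemma~\ref{lemma: units}, and the only point demanding a little care is to match the additive notation $\mathcal{U}(M)$ for the unit group of $M$ with the multiplicative notation $M^{\times}$ appearing in the characterization of SFFMs.
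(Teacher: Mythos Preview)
Your proposal is correct and matches the paper's approach exactly: the paper does not even write out a proof, simply stating that the result is a direct corollary of Theorem~\ref{prop: FFS} after recording the two observations you invoke (the characterization of SFFMs as FFMs with finite unit group, and the description of $S[M]^{\times}$ from Lemma~\ref{lemma: units}). Your write-up is just an explicit unpacking of that sentence.
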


In theorems~\ref{theorem: BFS} and \ref{prop: FFS}, the assumption that $S$ is additively reduced is not superfluous as, again, $F[\mathbb{Q}]$ does not satisfy the ACCP for any field $F$ by \cite[Theorem~14.17]{rG84}. We conclude this section providing a large class of finite factorization semidomains, but first let us introduce a definition: a positive semiring $P$ is \emph{well-ordered} if $P$ contains no decreasing sequence.

\begin{prop}
	Let $P$ be a well-ordered positive semiring. Then $P$ is an FFS.
\end{prop}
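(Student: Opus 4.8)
The plan is to work directly with the multiplicative monoid $P^*$, a cancellative commutative submonoid of $(\rr_{>0},\cdot)$, and to use the hypothesis ``$P$ contains no decreasing sequence'' at two places: once to push the atoms away from $1$ (so that factorization lengths are bounded), and once, via a compactness argument, to rule out infinitely many factorizations of a fixed element.

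First I would record the elementary consequences of well-orderedness. If some $p\in P$ satisfied $0<p<1$, then $(p^n)_{n\ge 1}$ would be a strictly decreasing sequence in $P^*$, which is impossible; hence $P^*=P\setminus\{0\}\subseteq[1,\infty)$. In particular $P^{\times}=\{1\}$, since $u,u^{-1}\in P^*$ forces $u=u^{-1}=1$, so every atom of $P^*$ is a nonunit lying in $P^*\setminus\{1\}$. As $2=1+1\in P^*$, the set $P^*\setminus\{1\}$ is nonempty, and being a subset of the well-ordered set $P$ it has a least element $\mu$, with $\mu>1$; thus $\mathcal{A}(P)\subseteq[\mu,\infty)$. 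Next, given an ascending chain $(c_nP^*)_{n\in\nn}$ of principal ideals of $P^*$, the relation $c_{n+1}\mid_{P^*}c_n$ forces $c_{n+1}\le c_n$, so $(c_n)_{n\in\nn}$ is a non-increasing sequence in $P$; by well-orderedness it has only finitely many strict decreases, hence is eventually constant and the chain stabilizes. So $P^*$ satisfies the ACCP and, in particular, $P$ is atomic, and it makes sense to speak of $\mathsf{Z}_{P^*}(c)$.

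Now fix $c\in P^*$. Every factorization $c=p_1\cdots p_k$ into atoms satisfies $p_i\ge\mu$, whence $\mu^k\le c$ and $k\le\log_\mu c$; so the factorization lengths of $c$ are bounded, and it suffices to show that for each fixed $k$ there are only finitely many factorizations of $c$ of length $k$. After ordering the atoms, such a factorization is a tuple $(p_1,\dots,p_k)$ with $\mu\le p_i\le c$ (the upper bound because $c/p_i\in P^*\subseteq[1,\infty)$) and $\prod_{i}p_i=c$; thus it lies in the compact box $[\mu,c]^k$. If there were infinitely many of them, we could pick a sequence $(t^{(j)})_{j\in\nn}$ of pairwise distinct such tuples $t^{(j)}=(p_1^{(j)},\dots,p_k^{(j)})$ converging to some $t^{\ast}=(\pi_1,\dots,\pi_k)\in[\mu,c]^k$ with $\prod_i\pi_i=c$. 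For each fixed $i\in\llbracket 1,k\rrbracket$ the sequence $(p_i^{(j)})_{j\in\nn}$ lies in $P$ and converges to $\pi_i$; since $P$ has no decreasing sequence, only finitely many of its terms can exceed $\pi_i$ (otherwise the ``running minimum'' construction would yield a strictly decreasing subsequence inside $P$). Hence there is $J$ such that $p_i^{(j)}\le\pi_i$ for all $j\ge J$ and all $i$. But then $c=\prod_i p_i^{(j)}\le\prod_i\pi_i=c$, and since all factors are positive this forces $p_i^{(j)}=\pi_i$ for every $i$, i.e. $t^{(j)}=t^{\ast}$ for all $j\ge J$, contradicting the pairwise distinctness of the $t^{(j)}$. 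Therefore $c$ has finitely many factorizations of each length, hence finitely many factorizations altogether, so $P^*$ is an FFM and $P$ is an FFS.

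The two observations in the second paragraph are routine; the step I expect to be the crux is the finiteness of the length-$k$ factorizations, where the interplay between compactness of $[\mu,c]^k$ and the ``limits are approached only from below'' feature of a well-ordered subset of $\rr$ does the real work. One should also verify the harmless bookkeeping point that infinitely many length-$k$ factorizations of $c$ really produce infinitely many distinct tuples in $[\mu,c]^k$: factorizations are elements of the free commutative monoid on $\mathcal{A}(P)$, so distinct factorizations of the same length are distinct multisets of atoms, hence correspond to distinct ordered tuples. Note that the argument uses only the multiplicative monoid $P^*\subseteq(\rr_{\ge 1},\cdot)$ together with well-orderedness, so it in fact shows that every well-ordered multiplicative submonoid of $\rr_{\ge 1}$ (equivalently, every well-ordered additive submonoid of $\rr_{\ge 0}$) is an FFM.
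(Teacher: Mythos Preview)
Your proof is correct, but it takes a considerably different and more laborious route than the paper's. The paper invokes the characterization from Geroldinger--Halter-Koch (their Proposition~1.5.5) that a monoid is an FFM if and only if every nonzero element has only finitely many non-associate divisors, and then argues in three lines: if some $b\in P^*$ had infinitely many divisors, this infinite subset of the well-ordered set $P$ would contain a strictly increasing sequence $(b_n)$, and then $(bb_n^{-1})$ would be a strictly decreasing sequence in $P$, a contradiction. You instead build everything from scratch: you first extract $P^*\subseteq[1,\infty)$ and the minimum nonunit $\mu>1$, establish the ACCP and hence atomicity, prove the BFM bound $k\le\log_\mu c$, and finally dispose of each fixed length via a compactness argument on $[\mu,c]^k$ combined with the ``limits in a well-ordered set are approached only from below'' observation. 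What your approach buys is self-containment (no black-box citation) and some pleasant auxiliary facts along the way; what the paper's approach buys is brevity, since once the divisor-finiteness criterion is in hand the whole thing collapses to a single sequence manipulation and never touches factorizations directly.
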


\begin{proof}
	Suppose towards a contradiction that there exists an element $b \in P^*$ such that $b$ has infinitely many non-associates (multiplicative) divisors. Then there exists an increasing sequence $(b_n)_{n \in \nn}$ consisting of non-associates divisors of $b$, which means that the underlying set of the decreasing sequence $(bb_n^{-1})_{n \in \nn}$ is a subset of $P$. This contradicts the fact that $P$ is well-ordered. By \cite[Proposition~1.5.5]{GH06}, the positive semiring $P$ is an FFS.
\end{proof}

\section{Factoriality Properties} \label{sec: factoriality}
\smallskip

An additively reduced HFS is an FFS. However, the reverse implication does not hold in general. Consider the following example.

\begin{example}
	The semidomain $S \coloneqq \nn_0[\sqrt{6}]$ is not half-factorial by \cite[Theorem~3.1]{CCMS09}. On the other hand, it is not hard to verify that if $b' + c'\sqrt{6} \,\mid_{S} b + c \sqrt{6}$, where $b + c \neq 0$, then $b' + c' \leq b + c$, which implies that $S$ is an FFS by virtue of \cite[Proposition~1.5.5]{GH06}.
\end{example}

Recall that an atomic monoid $M$ is a \emph{length-factorial monoid} (\emph{LFM}) if for all $b \in M$ and $z, z' \in \mathsf{Z}(b)$, the equality $\vert z\vert = \vert z'\vert$ implies that $z = z'$. We say that a semidomain $S$ is a \emph{length-factorial semidomain} (\emph{LFS}) if its multiplicative monoid $S^*$ is an LFM. It is evident that every UFS is an LFS. However, it is not clear whether the reverse implication holds (see \cite[Question~5.7]{fghp2022}). It is known that an integral domain is an LFS if and only if it is a UFS (\cite[Corollary~2.11]{JCWS2011}).   

In this section, we prove that an additively reduced monoid semidomain $S[M]$ is not factorial (resp., half-factorial, length-factorial), unless $M$ is the trivial group and $S$ is factorial (resp., half-factorial, length-factorial). We also provide large classes of semidomains with full and infinity elasticity. Throughout this section, we assume that a polynomial expression is always written in canonical form.

\begin{theorem} \label{prop: UFS}
	Let $S$ be an additively reduced semidomain, and let $M$ be a torsion-free monoid (written additively). The following statements hold. 
	\begin{enumerate}
		\item $S[M]$ is a UFS if and only if $S$ is a UFS and $M$ is the trivial group. 
		\item $S[M]$ is an LFS if and only if $S$ is an LFS and $M$ is the trivial group.
		\item $S[M]$ is an HFS if and only if $S$ is an HFS and $M$ is the trivial group.
	\end{enumerate}
\end{theorem}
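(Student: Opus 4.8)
The plan is to prove all three equivalences simultaneously by observing that the nontrivial direction reduces to a single claim: if $M$ is a torsion-free monoid that is \emph{not} the trivial group, then $S[M]$ is not an HFS (and hence, a fortiori, not an LFS nor a UFS), regardless of $S$. The easy directions are immediate: if $S$ is a UFS (resp.\ LFS, HFS) and $M$ is the trivial group, then $S[M] = S$ and there is nothing to prove. Similarly, if $S[M]$ is a UFS (resp.\ LFS, HFS), then since $S^*$ is a divisor-closed submonoid of $S[M]^*$ by Lemma~\ref{lemma: units} (with $S^\times = S \cap S[M]^\times$), the monoid $S^*$ inherits the corresponding property; and to see that $M$ must be trivial, we use the submonoid $M_1 = \{x^m \mid m \in M\} \cong M$, which is divisor-closed in $S[M]^*$ with $M_1^\times = M_1 \cap S[M]^\times = \{x^m \mid m \in \mathcal{U}(M)\}$, so $M$ would itself be a UFM (resp.\ LFM, HFM); what remains is to show that a \emph{nontrivial} torsion-free $M$ cannot coexist with any semidomain $S$ while keeping $S[M]$ half-factorial.

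To prove that claim, first reduce to the two structurally extreme cases for a nontrivial torsion-free monoid: either $M$ has a nontrivial unit group $\mathcal{U}(M)$, or $M$ is reduced and nontrivial. In the first case, pick $u \in \mathcal{U}(M)$ with $u \neq 0$; then $x^u \in S[M]^\times$, so this case contributes nothing new and we may as well pass to $M_{\red}$, which is again torsion-free (a standard fact, or argue via $\mathcal{G}(M)$) and, by hypothesis, we are in a situation where $M_{\red}$ could still be trivial only if $M$ is a group. If $M$ is a nontrivial torsion-free group, it is divisible-free enough to contain a copy of $\mathbb{Z}$: indeed $M = \mathcal{G}(M)$ here and any nonzero element generates an infinite cyclic subgroup, but more usefully $M$ itself is not reduced and we can exhibit non-half-factoriality directly inside $S[\mathbb{Z}] \subseteq S[M]$. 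In the second, main case, $M$ reduced and nontrivial, choose an atom or simply a minimal nonzero element $m_0$ in the total order on $M$ (which exists by the compatible total order from \cite[Corollary~3.4]{rG84}, after restricting to the submonoid generated by any nonzero element). The key device is then a polynomial identity producing two factorizations of different lengths.

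The concrete obstruction I would use: working inside $S[M]$ where $M$ contains an element $m$ with $m, 2m, 3m, \ldots$ all distinct (true since $M$ is torsion-free and $m \neq 0$), consider the element $f = x^{3m} - x^{m}$ — wait, $S$ need not have additive inverses, so instead use a genuinely positive identity. Take $1 + x^m \in S[M]$; since $S$ is additively reduced, $1, x^m$ are distinct nonunit-free terms, and one checks $(1+x^m)(1 + x^m + x^{2m}) \cdot(\text{something})$ does not immediately split. The cleaner route: mimic the classical fact that in $S[\mathbb{Z}]$ (Laurent-type) or even just $S[\mathbb{N}_0 m]$, one has $x^{2m} + x^{3m} = x^{m}(x^{m} + x^{2m})$ giving a monomial times a non-monomial, and iterate to get arbitrarily long factorizations of a fixed element whose minimal factorization into atoms is shorter — the monomial $x^m$ itself may be an atom of $S[M]$ or may further split, but in either case counting supports (via the inequality $|\supp(fg)| \geq |\supp(f)| + |\supp(g)| - 1$ from the proof of Theorem~\ref{theorem: BFS}) together with the degree/leading-coefficient bookkeeping forces two atomic factorizations of $x^{3m} + x^{4m}$, say, of different lengths. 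The main obstacle is handling the interaction with $S$: if $S$ itself is not atomic the element might have no factorization into atoms at all, so one should either first invoke that $S[M]$ being an HFS forces it to be atomic, hence $S$ atomic and $M$ atomic by Theorem~\ref{prop: atomic} (which also hands us the mcd condition for free), and \emph{then} run the length-comparison argument with all pieces genuinely factored into atoms of $S[M]$; the bookkeeping that the two resulting factorization lengths truly differ (rather than coinciding after the monomials are themselves split into atoms of $S$) is the delicate point, and I expect it to be resolved by choosing the exponents so that the difference in support sizes cannot be absorbed — precisely, by exploiting that multiplying by a monomial $x^{km}$ changes degree by $km$ but not support size, whereas the non-monomial cofactor does change support size, and the two cannot be reconciled under the rigidity that half-factoriality would impose.
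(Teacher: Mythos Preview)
Your reduction strategy contains a genuine logical error. You claim that showing $S[M]$ is not an HFS would yield ``a fortiori'' that it is not an LFS, but this implication is false: length-factoriality and half-factoriality are independent conditions. An LFM can have elements with factorizations of several different lengths (one factorization at each length), so exhibiting two factorizations of \emph{different} lengths does nothing to rule out length-factoriality. Thus your single-claim reduction handles at best parts~(1) and~(3), and part~(2) is left untouched.

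Even for part~(3), the concrete obstruction you sketch does not work. An element like $x^{3m}+x^{4m}=x^{3m}(1+x^m)$ has support of size~$2$, and because $S$ is additively reduced, any factorization of it into non-monomials is impossible (two non-monomial factors would force support size at least~$3$). Hence every factorization is (atoms of the monomial $x^{3m}$) times the binomial $1+x^m$, and these all have the same length once $S$ and $M$ are atomic. The support-size inequality you invoke pushes in the wrong direction for producing length discrepancies. Your closing paragraph acknowledges this is ``the delicate point'' and offers only an expectation, not an argument.

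The paper proceeds quite differently. For the LFS statement it exhibits the identity
\[
(x^m+1)(x^{4m}+x^{2m}+1)=(x^{3m}+1)(x^{2m}+x^m+1)
\]
and argues (via a careful case analysis on possible binomial factorizations) that both sides yield factorizations into atoms of the \emph{same} length but with non-associate factors, contradicting length-factoriality directly. For the HFS statement it uses a separate identity $f_1f_2=f_3f_4f_5$ with $f_1,f_2$ provably irreducible four-term expressions and $f_3,f_4$ irreducible binomials, yielding factorizations of lengths~$2$ and at least~$3$. In both cases the irreducibility proofs rely only on support-size and degree bookkeeping forced by the additively reduced hypothesis --- exactly the tools you had in hand, but applied to carefully engineered identities rather than to monomial-times-binomial splittings.
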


\begin{proof}
	Let us start by proving statements $(1)$ and $(2)$. The reverse implications of both statements hold trivially. Now assume that $S[M]$ is an LFS. Suppose towards a contradiction that there exists a nonzero $m \in M$. Without loss of generality, we may assume that $m > 0$. In fact, if $m < 0$ for all $m \in M$ then we can consider the monoid semidomain $S[-M]$, where $-M = \{-m \mid m \in M\}$. Observe that $S[-M]$ is isomorphic to $S[M]$ via the isomorphism induced by the automorphism $\sigma\colon\mathcal{G}(M) \to \mathcal{G}(M)$ given by $
	\sigma(m) = -m$. Consider now the polynomial expressions 
	\[
	f_1 = x^m + 1, \hspace{.3 cm} f_2 = x^{3m} + 1, \hspace{.3 cm} f_3 = x^{2m} + x^m + 1, \hspace{.3 cm} \text{ and } \hspace{.3 cm} f_4 = x^{4m} + x^{2m} + 1
	\]
	in $S[M]$. Since $M$ is torsion-free, we have that $f_i$ and $f_j$ are not associates in $S[M]$ for $i \neq j$. Clearly, the equality $f_1f_4 = f_2f_3$ holds. Since $f_1$ and $f_2$ are binomials and $f_1(0) = f_2(0) = 1$, the polynomial expressions $f_1$ and $f_2$ are irreducibles in $S[M]$. On the other hand, observe that $\vert\mathsf{L}(f_3)\vert = 1$. Indeed, either $f_3$ is irreducible in $S[M]$ or $f_3$ is a product of two irreducible binomials. Similarly, we have $\vert\mathsf{L}(f_4)\vert = 1$. We argue that $\mathsf{L}(f_3) = \mathsf{L}(f_4)$. If we can write $f_3$ as a product of two irreducible binomials in $S[M]$ then none of the factors is an associate of $x^m + 1$ in $S[M]$ (as the reader can easily verify) and, in this case, we can also write $f_4$ as a product of two irreducible binomials in $S[M]$ using a straightforward substitution. Conversely, suppose that
	\begin{equation} \label{eq: factorization of f_4}
		\left(s_1x^{m_1} + s_2x^{m_2}\right)\left(s_3x^{m_3} + s_4x^{m_4}\right) \in \mathsf{Z}(f_4),
	\end{equation}
	where $s_1, s_2,s_3,s_4 \in S^*$ and $m_1,m_2,m_3,m_4 \in M$. From Equation~\eqref{eq: factorization of f_4}, we obtain that $s_2s_4 = 1$ and $m_2 + m_4 = 0$. Consequently, there is no loss in assuming that $s_2 = s_4 = 1$ and $m_2 = m_4 = 0$. This, in turn, implies that $m_1 = m_3 = 2m$. Hence we have $f_4 = (s_1x^{2m} + 1)(s_3x^{2m} + 1)$, which implies that $f_3 = (s_1x^{m} + 1)(s_3x^{m} + 1)$ for $s_1, s_3 \in S^*$. Observe that neither $s_1x^m + 1$ nor $s_3x^m + 1$ is an associate of $x^m + 1$ in $S[M]$. Thus $\mathsf{L}(f_3) = \mathsf{L}(f_4)$, and we can conclude that the polynomial expression
	\[
	x^{5m} + x^{4m} + x^{3m} + x^{2m} + x^{m} + 1 \in S[M]
	\]
	has two different factorizations of the same length, which contradicts that $S[M]$ is an LFS. Therefore, $M$ is the trivial group which, in turn, implies that $S$ is an LFS. If, additionally, the semidomain $S[M]$ is a UFS then $S$ is also a UFS. We can conclude that statements $(1)$ and $(2)$ hold. 
	
	To tackle the nontrivial implication of statement $(3)$, suppose that $S[M]$ is an HFS, and assume towards a contradiction that there exists a nonzero $m \in M$. Again, there is no loss in assuming that $m > 0$. Consider the polynomial expressions
	\begin{equation*}
		\begin{split}
			f_1 &= x^{4m} + x^{2m} + x^m + 1, \hspace{.8cm} f_2 =  x^{6m} + x^{5m} + x^{3m} + 1,\\
			f_3 &= x^m + 1, \hspace{.8cm} f_4 = x^{2m} + 1, \hspace{.8 cm} \text{ and }\hspace{.8cm} f_5 = x^{7m} + 2x^{4m} + 1
		\end{split}
	\end{equation*}
	in $S[M]$. Since $M$ is torsion-free, we have that $f_i$ and $f_j$ are not associates in $S[M]$ for $i \neq j$. As the reader can easily check, the equality $f_1f_2 = f_3f_4f_5$ holds. Moreover, we already established that the polynomial expressions $f_3$ and $f_4$ are irreducibles in $S[M]$. Next we argue that $f_1$ and $f_2$ are also irreducibles in $S[M]$.  
	
	\noindent \textsc{Case 1:} $f_1 = x^{4m} + x^{2m} + x^m + 1$. By way of contradiction, suppose that $f_1$ reduces in $S[M]$. Since $f_1$ is not divisible in $S[M]$ by any nonunit monomial, $f_1$ factors in $S[M]$ either as a binomial times a trinomial, or into two binomials, yielding the following two subcases. 
	
	\noindent \textsc{Case 1.1:} $f_1 = (s_1x^{m_1} + s_2x^{m_2})(s_3x^{m_3} + s_4x^{m_4} + s_5x^{m_5})$ with coefficients $s_1, s_2, s_3, s_4, s_5 \in S^*$ and exponents $m_1, m_2, m_3, m_4, m_5 \in M$. From this decomposition, we obtain the following equations:
	\begin{equation*}
		m_1 + m_3 = 4m, \hspace{.3 cm} m_2 + m_5 = 0, \hspace{.3 cm} m_2 + m_3 = 2m, \hspace{.3cm} \text{ and }\hspace{.3 cm} m_1 + m_5 = m,
	\end{equation*}
	which generate the contradiction $4m = 3m$.
	
	\noindent \textsc{Case 1.2:} $f_1 = (s_1x^{m_1} + s_2x^{m_2})(s_3x^{m_3} + s_4x^{m_4})$ with coefficients $s_1, s_2, s_3, s_4 \in S^*$ and exponents $m_1, m_2, m_3, m_4 \in M$. From this decomposition, we obtain the following equations:
	\begin{equation*}
		m_1 + m_3 = 4m, \hspace{.3 cm} m_2 + m_4 = 0, \hspace{.3cm} \text{ and }\hspace{.3 cm} m_1 + m_4 + m_2 + m_3 = 3m,
	\end{equation*}
	which is evidently a contradiction.
	
	\noindent As a consequence, we may conclude that the polynomial expression $f_1$ is irreducible in $S[M]$. To show that $f_2$ is irreducible in $S[M]$, we proceed similarly.
	
	\noindent \textsc{Case 2:} $f_2 =  x^{6m} + x^{5m} + x^{3m} + 1$. By way of contradiction, suppose that $f_2$ reduces in $S[M]$. Since $f_2$ is not divisible in $S[M]$ by any nonunit monomial, $f_2$ factors in $S[M]$ either as a binomial times a trinomial, or into two binomials, yielding the following two subcases. 
	
	\noindent \textsc{Case 2.1:} $f_2 = (s_1x^{m_1} + s_2x^{m_2})(s_3x^{m_3} + s_4x^{m_4} + s_5x^{m_5})$ with coefficients $s_1, s_2, s_3, s_4, s_5 \in S^*$ and exponents $m_1, m_2, m_3, m_4, m_5 \in M$. From this decomposition, we obtain the following equations:
	\begin{equation*}
		m_1 + m_3 = 6m, \hspace{.3 cm} m_2 + m_5 = 0, \hspace{.3 cm} m_2 + m_3 = 5m, \hspace{.3cm} \text{ and }\hspace{.3 cm} m_1 + m_5 = 3m,
	\end{equation*}
	which generate the contradiction $6m = 8m$.
	
	\noindent \textsc{Case 2.2:} $f_2 = (s_1x^{m_1} + s_2x^{m_2})(s_3x^{m_3} + s_4x^{m_4})$ with coefficients $s_1, s_2, s_3, s_4 \in S^*$ and exponents $m_1, m_2, m_3, m_4 \in M$. From this decomposition, we obtain the following equations:
	\begin{equation*}
		m_1 + m_3 = 6m, \hspace{.3 cm} m_2 + m_4 = 0, \hspace{.3cm} \text{ and }\hspace{.3 cm} m_1 + m_4 + m_2 + m_3 = 8m,
	\end{equation*}
	which is evidently a contradiction.
	
	Since the polynomial expressions $f_1$ and $f_2$ are irreducibles in $S[M]$, the element
	\[
	x^{10m} + x^{9m} + x^{8m} + 3x^{7m} + 2x^{6m} + 2x^{5m} + 2x^{4m} + x^{3m} + x^{2m} + x^{m} + 1 \in S[M]
	\]
	has a factorization of length $2$ (i.e., $f_1f_2$) and a factorization of length at least $3$ (i.e., $f_3f_4f_5$). Consequently, the semidomain $S[M]$ is not an HFS. This contradiction proves that $M$ is the trivial group which, in turn, implies that $S$ is an HFS.
\end{proof}

Based on Theorem~\ref{prop: UFS}, one might think that an additively reduced HFS is a UFS, but this is not the case.

\begin{example}
	Let $D = \zz[M]$, where $M = \langle (1,n) \mid n \in \nn \rangle \subseteq \nn_0^2$. Clearly, the (cancellative and commutative) monoid $M$ is torsion-free, which implies that $D$ is an integral domain by \cite[Theorem~8.1]{rG84}. Let $S = \{f \in \nn_0[M] \mid f(0) > 0\}$. Since $S$ is a multiplicatively subset of $D$ (i.e., a submonoid of $(D^*,\cdot)$), we can consider the localization of $D$ at $S$, which we denote by $S^{-1}D$. Set $R = (\nn_0[M] \times S)/\sim$, where $\sim$ is an equivalence relation on $\nn_0[M] \times S$ defined by $(f,g) \sim (f',g')$ if and only if $fg' = gf'$. We let $\frac{f}{g}$ denote the equivalence class of $(f,g)$. Define the following operations in $R$:
	\[
	\frac{f}{g} \cdot \frac{f'}{g'} = \frac{ff'}{gg'} \hspace{.3 cm} \text{ and } \hspace{.3 cm} \frac{f}{g} + \frac{f'}{g'} = \frac{fg' + gf'}{gg'}.
	\]
	It is routine to verify that these operations are well defined and that $(R,+,\cdot)$ is an additively reduced semiring\footnote{The localization of semirings is presented in greater generality in \cite[Chapter~11]{JG1999}.}. Let $\varphi\colon R \to S^{-1}D$ be a function given by $\varphi(f/g) = \overline{f/g}$, where $\overline{f/g}$ represents the equivalence class of $(f,g)$ as an element of $S^{-1}D$. It is easy to see that $\varphi$ is a well-defined semiring homomorphism. Since $\varphi$ is injective, $R$ is an additively reduced semidomain.
	
	It is known that $M$ is an HFM that is not a UFM (see \cite[Example~4.23]{CGG20}). Consequently, the semidomain $R$ is not a UFS. In fact, if $z = a_1 + \cdots + a_n$ and $z' = a_1' + \cdots + a_m'$ are two distinct factorizations of $m \in M$ then it is not hard to verify that
	\[
	\left(\frac{x^{a_1}}{1}\right) \cdots \left(\frac{x^{a_n}}{1}\right) \hspace{.5 cm} \text{ and } \hspace{.5 cm} \left(\frac{x^{a_1'}}{1}\right) \cdots \left(\frac{x^{a_m'}}{1}\right)
	\]
	are two distinct factorizations of the element $\frac{x^m}{1}\in R$. Next we show that $R$ is an HFS. Since all the elements of $M \setminus \{(0,0)\}$ that are not atoms are divisible by $(1,1)$ and $x^{(1,1)}/1$ is irreducible in $R$, the semidomain $R$ is atomic. Now let $f/g$ be a nonzero nonunit element of $R$. Since $f/g \simeq_{R} f/1$, there is no loss in assuming that $g = 1$. Write $f = c_kx^{m_k} + \cdots + c_1x^{m_1}$, where $m_k > \cdots > m_1 > (0,0)$ in the lexicographic order. Let
	\[
	z = \left(\frac{f_1}{g_1}\right) \cdots \left(\frac{f_n}{g_n}\right)\hspace{.5 cm} \text{ and } \hspace{.5 cm}	z' = \left(\frac{f'_1}{g'_1}\right) \cdots \left(\frac{f'_m}{g'_m}\right) 
	\]  
	be two distinct factorizations of $f/1$ in $R$, and suppose towards a contradiction that $n \neq m$. Observe that if $f'/g'$ is an atom of $R$ then writing $f' = d_lx^{o_l} + \cdots + d_1x^{o_1}$ with $o_l > \cdots > o_1 > (0,0)$ in the lexicographic order, we have that $o_1 \in \mathcal{A}(M)$ because all the elements of $M\setminus \{(0,0)\}$ that are not atoms are divisible by $(1,1)$. Consequently, the element $m_1 \in M$ has two factorizations of lengths $n$ and $m$, which contradicts that $M$ is an HFM. Therefore, $R$ is an additively reduced HFS that is not a UFS.   
\end{example}

For an atomic monoid $M$, the \emph{elasticity} of a nonunit $b \in M$, denoted by $\rho(b)$, is defined as
\[
\rho(b) = \frac{\sup \mathsf{L}(b)}{\inf \mathsf{L}(b)}.
\]
By convention, we set $\rho(u) = 1$ for every $u \in M^{\times}$\!. It is easy to see that, for all $b \in M$, we have that $\rho(b) \in \{\infty\} \cup \qq_{\ge 1}$. The \emph{elasticity} of the monoid $M$ is defined to be
\[
\rho(M) := \sup \{\rho(b) \mid b \in M \}.
\]
The \emph{set of elasticities} of $M$ is denoted by $R(M) := \{\rho(b) \mid b \in M\}$, and $M$ is said to have \emph{full elasticity} provided that, for $q \in  \qq \cap [1, \rho(M)]$, there exists an element $b \in M \setminus M^{\times}$ such that $\rho(b) = q$. Observe that a monoid $M$ is an HFM if and only if $\rho(M) = 1$ (resp., $\vert R(M)\vert = 1$). So, we can think of monoids having full and infinite elasticity as being as far as they can possibly be from being an HFM. In fact, the elasticity was first studied by Steffan~\cite{jlS86} and Valenza~\cite{rV90} with the purpose of measuring the deviation of an atomic monoid from being half-factorial. 

Next we show that an atomic monoid semidomain $S[M]$ has full and infinite elasticity provided that $(S,+)$ is reduced, $M$ is nontrivial and torsion-free, and $\mathcal{F}(S)[M]$ is a UFD, where $\mathcal{F}(S)$ denotes the quotient field of $\mathcal{G}(S)$. This generalizes \cite[Proposition~5.7]{fghp2022} in which the authors proved that a semidomain $S[x]$ has full and infinite elasticity if $S$ is additively reduced. For the rest of the section, we identify a semidomain $S$ with a subsemiring of the integral domain $\mathcal{G}(S)$ (resp., $\mathcal{F}(S)$) (see Lemma~\ref{lem:characterization of integral semirings}).

\begin{prop} \label{prop: infinite and full elasticity}
	An atomic monoid semidomain $S[M]$ has full and infinite elasticity provided that $(S,+)$ is reduced, $M$ is nontrivial and torsion-free, and $\mathcal{F}(S)[M]$ is a UFD.
\end{prop}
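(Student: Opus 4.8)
The goal is to exhibit, for every rational $q \geq 1$, a nonunit element of $S[M]$ with elasticity exactly $q$, and also to produce elements with infinite elasticity. Since $M$ is nontrivial and torsion-free, fix a nonzero $m \in M$; replacing $M$ by $-M$ if necessary (and using the isomorphism $S[M] \cong S[-M]$ from the proof of Theorem~\ref{prop: UFS}), we may assume $m > 0$ with respect to a compatible total order on $M$. The strategy is to work with binomials of the form $x^{am} + 1$ and $s x^{am} + 1$ for $a \in \nn$ and $s \in S^*$: such polynomial expressions have constant term $1$ and are not divisible by any nonunit monomial, so any factorization in $S[M]$ takes place among polynomial expressions with the same property, and the analysis reduces to combinatorics of the exponents together with the factorization behavior of coefficients in $S$.

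**Key steps.** First I would record the basic irreducibility facts: $x^{am} + 1$ is irreducible in $S[M]$ whenever $a \in \nn$ is such that $x^{am}+1$ admits no factorization into two binomials over $S[M]$; more usefully, I would identify a family of exponents where products like $\prod_{i}(x^{a_i m}+1)$ can be regrouped in genuinely different ways. The cleanest route is the standard one used for $S[x]$: consider, for a prime power or suitably chosen integer $N$, elements built from cyclotomic-type factorizations of $x^{Nm} - 1$ or $x^{Nm}+1$ inside $\mathcal{F}(S)[M]$, where by hypothesis $\mathcal{F}(S)[M]$ is a UFD. Because $\mathcal{F}(S)[M] \cong \mathcal{F}(S)[\gp(\langle m\rangle)] $-extended appropriately is a UFD, the irreducible factorizations there are rigid, and I can pull back information about lengths of factorizations in $S[M]$ by comparing with factorizations in the UFD $\mathcal{F}(S)[M]$: any factorization in $S[M]$ refines to one in $\mathcal{F}(S)[M]$, which bounds lengths from above, while explicit factorizations over $S[M]$ give lower bounds. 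Concretely, to realize elasticity $q = \ell/k$ with $\ell \geq k$, I would take an element that has one factorization of length $k$ into monolithic/irreducible pieces of "large degree" and another factorization of length $\ell$ into binomials $x^{m_i}+1$, arranged so that $\sup \mathsf{L} = \ell$ and $\inf \mathsf{L} = k$; this is exactly the mechanism in \cite[Proposition~5.7]{fghp2022}, and the point is that the hypothesis "$\mathcal{F}(S)[M]$ is a UFD" is precisely what is needed to guarantee that no shorter factorization sneaks in, i.e. to pin down $\inf \mathsf{L}$. For infinite elasticity, I would iterate: an element whose factorizations have one fixed short length but arbitrarily long ones, e.g. a suitable power or product forcing $\sup \mathsf{L} = \infty$ while $\inf \mathsf{L}$ stays bounded.

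**The main obstacle.** The genuinely delicate part is controlling $\inf \mathsf{L}(f)$, i.e. proving that the element I construct has no factorization shorter than the intended minimum. Upper bounds on length and the existence of long factorizations are easy (just exhibit them), but ruling out short factorizations requires leveraging that $\mathcal{F}(S)[M]$ is a UFD: a factorization of $f$ in $S[M]$ of length $k$ gives a factorization in $\mathcal{F}(S)[M]$ into at most $k$ nonunits (some $S[M]$-nonunits could become units in $\mathcal{F}(S)[M]$, but those are monomials $sx^{\mu}$ with $\mu \in \mathcal{U}(M)$, and my $f$ has constant term $1$ so this is harmless), hence $f$ has at most $k$ irreducible factors counted with multiplicity in the UFD $\mathcal{F}(S)[M]$; so I must choose $f$ so that its number of irreducible factors (with multiplicity) in $\mathcal{F}(S)[M]$ equals the desired $\inf \mathsf{L}$, and simultaneously arrange that this minimal length is actually attained by a factorization living in $S[M]$ (using $(S,+)$ reduced to keep supports under control, as in Lemma~\ref{lemma: factors into monolithics}). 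Balancing these two requirements — matching the UFD-factorization length with an achievable $S[M]$-factorization — while keeping the long factorizations available, is where the construction has to be done carefully; once the right family of elements (parametrized by two integers governing the short and long lengths) is written down, verifying $\rho(f) = q$ and the infinite-elasticity claim is a routine check along the lines of \cite[Proposition~5.7]{fghp2022}.
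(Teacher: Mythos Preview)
Your outline has the right general shape---pass to a UFD overring and build explicit elements---but there is a genuine gap in how you propose to control lengths, and the actual construction is missing. The direction of the UFD bound is reversed in your sketch. If $f = g_1 \cdots g_k$ in $S[M]$ with each $g_i$ a nonunit of $\mathcal{F}(S)[M]$, then each $g_i$ contributes \emph{at least} one irreducible factor in the UFD, so the number $N$ of irreducible factors of $f$ there satisfies $N \geq k$; thus the UFD hypothesis bounds $\sup \mathsf{L}_{S[M]}(f)$ from above, not $\inf \mathsf{L}_{S[M]}(f)$ from below. Your stated plan---choose $f$ with $N$ equal to the target $\inf \mathsf{L}$ and then realise that length in $S[M]$---would force $\inf \mathsf{L} = \sup \mathsf{L} = N$ and produce elasticity $1$, the opposite of what is needed. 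What actually pins down both extremes is an exhaustive enumeration of all $S[M]$-factorizations of a carefully chosen $f$, obtained by listing its irreducible factors in a UFD overring and determining exactly which groupings of them land back in $S[M]$.

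The paper supplies the missing specifics, and they are not routine. One passes not to $\mathcal{F}(S)[M]$ but to $\mathcal{F}[M]$ with $\mathcal{F}$ an algebraic closure of $\mathcal{F}(S)$, and one must first locate a nonzero $a \in M$ such that $x^a + b$ is irreducible in $\mathcal{F}[M]$ for \emph{every} nonzero $b \in \mathcal{F}$; this uses the structural characterization of factorial monoid algebras (an atom of a reduced $M$, or an element of height $(0,0,\ldots)$ in $\mathcal{U}(M)$ via \cite[Lemma~4.1]{matsuda}) and fails for a generic nonzero $m$. The element is then $g = (x^a+n)^n(x^{2a}-x^a+1)(x^a+1)^k$. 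The heart of the argument is that $(x^a+n)^n(x^{2a}-x^a+1)$ is irreducible in $S[M]$: this rests on \cite[Lemma~2.1]{CCMS09}, which says $(y+n)^m(y^2-y+1)\in\nn_0[y]$ iff $m\geq n$, together with the observation (using that $(S,+)$ is reduced) that for at least one of the two roots $\alpha$ of $t^2+t+1$ one has $n\alpha\notin S$ for all $n\in\nn$. With these ingredients one checks that $g$ has exactly two $S[M]$-factorizations, of lengths $k+1$ and $k+n$, whence $\rho(g)=(k+n)/(k+1)$ ranges over all of $\qq_{>1}$. Your references to cyclotomic-type factorizations and to \cite[Proposition~5.7]{fghp2022} point in the right direction, but the CCMS09 positivity input, the selection of $a$, and the two-factorization count constitute the entire proof and are absent from your proposal.
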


\begin{proof}
	Let $\mathcal{F}$ be an algebraic closure of the field $\mathcal{F}(S)$, and note that the integral domain $\mathcal{F}[M]$ is a UFD by \cite[Theorem~14.16]{rG84}. Since $(S,+)$ is reduced, $S$ contains an isomorphic copy of $\nn_0$ which, in turn, implies that $\mathcal{F}$ contains an isomorphic copy of the algebraic closure of $\qq$.  
	
	First we show that there exists a nonzero element $a \in M$ such that $x^a + b$ is irreducible in $\mathcal{F}[M]$ for every nonzero $b \in \mathcal{F}$. Since $\mathcal{F}[M]$ is a UFD, the monoid $M$ is factorial and each nonzero element of the group \,$\mathcal{U}(M)$ of invertible elements of $M$ is of type $(0,0,\ldots)$ by \cite[Theorem~14.16]{rG84}. If $M$ is reduced then it is not hard to see that, for $a \in \mathcal{A}(M)$, the polynomial expression $x^a + b$ is irreducible in $\mathcal{F}[M]$ for any nonzero $b \in \mathcal{F}$. On the other hand, if $M$ is not reduced then there is no loss in assuming that $M = \mathcal{U}(M)$. To see why our previous assumption is valid, observe that $S[\mathcal{U}(M)]^*$ is a divisor-closed submonoid of $S[M]^*$. Indeed, if there exist polynomial expressions $f = s_1x^{h_1} + \cdots + s_nx^{h_n} \in S[\mathcal{U}(M)]^*$ and $g = s_1'x^{m_1} + \cdots + s_k'x^{m_k} \in S[M]^*$ such that $g \mid_{S[M]} f$ then, since $S$ is additively reduced, for each $i \in \llbracket 1,k \rrbracket$ there exists $j \in \llbracket 1,n \rrbracket$ such that $m_i \mid_M h_j$, but $\mathcal{U}(M)$ is a divisor-closed submonoid of $M$; consequently, we have that $g \in S[\mathcal{U}(M)]^*$. Now since $S[\mathcal{U}(M)]^*$ is a divisor-closed submonoid of $S[M]^*$, the semidomain $S[\mathcal{U}(M)]$ is atomic and if $S[\mathcal{U}(M)]$ has full and infinite elasticity then $S[M]$ has full and infinite elasticity too. Consequently, we may assume that $M$ is a group satisfying that all of its nonzero elements are of type $(0,0,\ldots)$. This, in turn, implies that there exists a nonzero $a \in M$ of height $(0,0,\ldots)$. By virtue of \cite[Lemma~4.1]{matsuda}, the polynomial expression $x^a + b$ is irreducible in $\mathcal{F}[M]$ for every nonzero $b \in \mathcal{F}$.  
	
	Consider now the polynomial expression $f = x^{2a} - x^a + 1$, where $a$ is a nonzero element of $M$ such that $x^a + b$ is irreducible in $\mathcal{F}[M]$ for every nonzero $b \in \mathcal{F}$. Observe that $f$ reduces in $\mathcal{F}[M]$. In fact, $f = (x^a + \alpha)(x^a + \beta)$ for some nonzero $\alpha, \beta \in \mathcal{F}$ satisfying that $\alpha\beta = 1$ and $\alpha + \beta = -1$. We already established that $x^a + \alpha$ and $x^a + \beta$ are irreducibles (in fact, primes) in $\mathcal{F}[M]$. Note that either $n\alpha \not\in S$ for any $n \in \nn$ or $n\beta \not\in S$ for any $n \in \nn$. Indeed, if $k\alpha$ and $t\beta$ are in $S$ for some $k,t \in \nn$ then we have
	\[
	-(tk) = tk(\alpha + \beta) = t(k\alpha) + k(t\beta) \in S,
	\]     
	which contradicts that $(S,+)$ is reduced. Without loss of generality, assume that $n\alpha \not\in S$ for any $n \in \nn$. We claim that the polynomial expression $(x^a + n)^n(x^{2a} - x^a + 1)$ is irreducible in $S[M]$ for every $n \in \nn$. It follows from \cite[Lemma~2.1]{CCMS09} that, for every $n, m \in \nn$, the polynomial $(y + n)^m(y^2 - y + 1)$ is in $\nn_0[y]$ if and only if $m \geq n$. By a straightforward substitution, we obtain that, for every $n, m \in \nn$, the polynomial expression $(x^a + n)^m(x^{2a} - x^a + 1)$ is in $S[M]$ if and only if $m \geq n$. Since $S[M]$ is atomic, we can write 
	\[
	(x^a + n)^n(x^{2a} - x^a + 1) = (x^a + n)^n(x^a + \alpha)(x^a + \beta) = f_1 \cdots f_k,
	\]
	where $k \in \nn$ and $f_1, \ldots, f_k$ are irreducibles in $S[M]$. Since $n\alpha \not\in S$ for any $n \in \nn$, if $x^a + \alpha \mid_{\mathcal{F}[M]} f_j$ for some $j \in \llbracket 1,k \rrbracket$ then $x^a + \beta \mid_{\mathcal{F}[M]} f_j$. Hence if $k \geq 2$ then, for some $j \in \llbracket 1,k \rrbracket$, we have that $f_j = (x^a + n)^l(x^{2a} - x^a + 1)$ for some $0 \leq l < n$, but we already showed that this is impossible. Therefore, $(x^a + n)^n(x^{2a} - x^a + 1)$ is irreducible in $S[M]$ for every $n \in \nn$. Clearly, $x^a + 1$ and $x^{3a} + 1$ are irreducibles in $S[M]$. For $k \in \nn$ and $n \in \nn_{>1}$, consider the polynomial expression
	\[
	g = (x^a + n)^n(x^{2a} - x^a + 1)(x^a + 1)^k \in S[M].
	\]
	Since $\mathcal{F}[M]$ is a UFD and $x^a + b$ is irreducible in $\mathcal{F}[M]$ for every nonzero $b \in \mathcal{F}$, the only two factorizations of $g$ in $S[M]$ are
	\[
	[(x^a + n)^n(x^{2a} - x^a + 1)]\cdot [x^a + 1]^k \hspace{.3 cm}\text{ and } \hspace{.3 cm}[x^a + n]^n \cdot [(x^{2a} - x^a + 1)(x^a + 1)]\cdot[x^a + 1]^{k - 1}
	\]
	with lengths $k + 1$ and $k + n$, respectively. Since
	\[
	\left\{\frac{k + n}{k + 1} \;\bigg\vert\; k \in \nn \text{ and } n \in \nn_{>1}\right\} = \qq_{> 1},
	\]
	we conclude that $S[M]$ has full and infinite elasticity.
\end{proof}

We now provide two large classes of additively reduced monoid semidomains with full and infinite elasticity.

\begin{cor} \label{prop: infinite and full elasticity final}
	Let $S[M]$ be an atomic monoid semidomain such that $S$ is additively reduced and $M$ is nontrivial and torsion-free. The following statements hold.
	\begin{enumerate}
		\item If $M$ is a reduced UFM then $S[M]$ has full and infinite elasticity.
		\item If $M$ is not reduced and every nonzero element of $\mathcal{U}(M)$ is of type $(0,0,\ldots)$ then $S[M]$ has full and infinite elasticity.
	\end{enumerate}
\end{cor}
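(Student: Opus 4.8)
The plan is to deduce both statements from Proposition~\ref{prop: infinite and full elasticity}: in each case it suffices to verify its three hypotheses, namely that $(S,+)$ is reduced (immediate, since $S$ is additively reduced), that the monoid (or a suitable divisor-closed sub-object of $S[M]$) is nontrivial and torsion-free, and that the relevant monoid ring over $\mathcal{F}(S)$ is a UFD. The only genuine content is the UFD condition, and for this I would use Gilmer's characterization \cite[Theorem~14.16]{rG84} in the form already exploited in the proof of Proposition~\ref{prop: infinite and full elasticity}: for a field $K$ and a torsion-free monoid $N$, the monoid ring $K[N]$ is a UFD if and only if $N$ is factorial and every nonzero element of $\uu(N)$ is of type $(0,0,\ldots)$.

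For statement (1), suppose $M$ is a reduced UFM. Then $\uu(M)$ is trivial, so the condition on elements of type $(0,0,\ldots)$ in \cite[Theorem~14.16]{rG84} is vacuous; combined with the factoriality of $M$, this gives that $\mathcal{F}(S)[M]$ is a UFD. Since $S$ is additively reduced and $M$ is nontrivial and torsion-free by hypothesis, Proposition~\ref{prop: infinite and full elasticity} applies and shows that $S[M]$ has full and infinite elasticity.

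For statement (2), suppose $M$ is not reduced and every nonzero element of $\uu(M)$ is of type $(0,0,\ldots)$. Here $M$ need not be factorial, so I would first replace $M$ by the group $\uu(M)$, exactly as in the proof of Proposition~\ref{prop: infinite and full elasticity}: using that $S$ is additively reduced and that $\uu(M)$ is a divisor-closed submonoid of $M$, one sees that $S[\uu(M)]^*$ is a divisor-closed submonoid of $S[M]^*$, hence $S[\uu(M)]$ is atomic, and it is enough to prove that $S[\uu(M)]$ has full and infinite elasticity (the elasticity of an element of a divisor-closed submonoid being computed inside it). Now $\uu(M)$ is a nontrivial torsion-free group, in particular trivially factorial since it has no nonunits, and $\uu(\uu(M)) = \uu(M)$ has all nonzero elements of type $(0,0,\ldots)$ by hypothesis; therefore $\mathcal{F}(S)[\uu(M)]$ is a UFD by \cite[Theorem~14.16]{rG84}. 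Applying Proposition~\ref{prop: infinite and full elasticity} to the atomic monoid semidomain $S[\uu(M)]$ yields that it has full and infinite elasticity, and so does $S[M]$.

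I expect the only delicate point to be the transfer step in (2), i.e.\ the assertion that full and infinite elasticity of $S[\uu(M)]$ passes to $S[M]$. This holds because a divisor-closed submonoid $N$ of an atomic monoid $\Gamma$ satisfies $N^{\times} = N \cap \Gamma^{\times}$ and $\mathsf{Z}_N(b) = \mathsf{Z}_\Gamma(b)$ for every $b \in N$, so that $R(N) \subseteq R(\Gamma)$; once $R(N) \supseteq \qq_{\geq 1}$, the same containment holds for $R(\Gamma)$, forcing $\rho(\Gamma) = \infty$ and full elasticity of $\Gamma$. Beyond this observation, the argument is just a correct bookkeeping invocation of \cite[Theorem~14.16]{rG84} and of Proposition~\ref{prop: infinite and full elasticity}.
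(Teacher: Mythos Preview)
Your proposal is correct and follows essentially the same approach as the paper: verify the UFD hypothesis via Gilmer's characterization and invoke Proposition~\ref{prop: infinite and full elasticity}, reducing to $\mathcal{U}(M)$ through the divisor-closed submonoid argument in case~(2). The only cosmetic difference is that the paper cites \cite[Theorem~14.15]{rG84} for the group case in~(2) rather than \cite[Theorem~14.16]{rG84}, and your added explanation of why full and infinite elasticity transfers from $S[\mathcal{U}(M)]$ to $S[M]$ makes explicit a step the paper leaves to the reader.
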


\begin{proof}
	Observe that if $M$ is a reduced UFM then $\mathcal{F}(S)[M]$ is a UFD by \cite[Theorem~14.16]{rG84}. Consequently, the statement $(1)$ follows from Proposition~\ref{prop: infinite and full elasticity}. Now suppose that $M$ is not reduced and that every nonzero element of $\mathcal{U}(M)$ is of type $(0,0,\ldots)$. We already established that $S[\mathcal{U}(M)]^*$ is a divisor-closed submonoid of $S[M]^*$, which implies that $S[\mathcal{U}(M)]$ is atomic. By \cite[Theorem~14.15]{rG84}, the integral domain $\mathcal{F}(S)[\mathcal{U}(M)]$ is a UFD. Then $S[\mathcal{U}(M)]$ has full and infinite elasticity by virtue of Proposition~\ref{prop: infinite and full elasticity}, which concludes our argument.   
\end{proof}

\begin{cor}
	For $n \in \nn$, an atomic polynomial semidomain $S[x_1, \ldots, x_n]$ (resp., $S[x_1^{\pm 1}, \ldots, x_n^{\pm 1}]$) has full and infinite elasticity provided that $(S,+)$ is reduced. 
\end{cor}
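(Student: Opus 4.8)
The plan is to deduce this corollary directly from Corollary~\ref{prop: infinite and full elasticity final} by treating the polynomial (resp. Laurent polynomial) semidomain as a monoid semidomain over an appropriate torsion-free monoid. For $S[x_1, \ldots, x_n]$ we set $M = \nn_0^n$ (written additively), and for $S[x_1^{\pm 1}, \ldots, x_n^{\pm 1}]$ we set $M = \zz^n$; in both cases there is an obvious semiring isomorphism identifying $S[x_1,\ldots,x_n]$ with $S[M]$ and $S[x_1^{\pm 1},\ldots,x_n^{\pm 1}]$ with $S[\zz^n]$, so the two semidomains in the statement are instances of $S[M]$ with $M$ nontrivial and torsion-free. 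The hypothesis that $(S,+)$ is reduced is exactly the hypothesis needed in Corollary~\ref{prop: infinite and full elasticity final}, and atomicity of $S[M]$ is assumed outright.

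First I would handle the polynomial case: here $M = \nn_0^n$ is a reduced monoid, and it is a UFM (it is free on $\{x_1, \ldots, x_n\}$, equivalently $\nn_0^n$ is a finite direct sum of copies of the free commutative monoid $\nn_0$, which is factorial), so part~(1) of Corollary~\ref{prop: infinite and full elasticity final} applies verbatim and yields that $S[x_1,\ldots,x_n]$ has full and infinite elasticity. Second, for the Laurent case $M = \zz^n$, the monoid is not reduced — in fact $M = \mathcal{U}(M) = \zz^n$ — so I would invoke part~(2) of Corollary~\ref{prop: infinite and full elasticity final}; the only thing to check is that every nonzero element of $\mathcal{U}(M) = \zz^n$ is of type $(0,0,\ldots)$. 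Given $g \in \zz^n \setminus \{0\}$, the set $N_g$ of positive integers $k$ for which $kx = g$ is solvable in $\zz^n$ is contained in $\{k \in \nn \mid k \text{ divides } \gcd \text{ of the coordinates of } g\}$ (indeed if $kx = g$ then $k$ divides each coordinate $g_i$), hence $N_g$ is finite; so every nonzero element of $\zz^n$ is of type $(0,0,\ldots)$, and part~(2) gives the conclusion.

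I do not anticipate a serious obstacle here, since the corollary is purely a matter of recognizing the polynomial and Laurent semidomains as monoid semidomains over $\nn_0^n$ and $\zz^n$ respectively and then quoting the already-established Corollary~\ref{prop: infinite and full elasticity final}; the mildest point requiring a sentence of justification is the type-$(0,0,\ldots)$ verification for $\zz^n$ in the Laurent case, which is the elementary divisibility observation above. One could also phrase the argument uniformly by noting that $\zz^n = \mathcal{G}(\nn_0^n)$ is the Grothendieck group of a finitely generated torsion-free monoid, so all its nonzero elements are of type $(0,0,\ldots)$ (a fact already used in the proof of Theorem~\ref{prop: FFS}), which streamlines the write-up.
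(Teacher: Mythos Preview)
Your proposal is correct and is precisely the argument the paper intends: the corollary is stated without proof, as an immediate consequence of Corollary~\ref{prop: infinite and full elasticity final}, and your identification of $S[x_1,\ldots,x_n]$ with $S[\nn_0^n]$ (a reduced UFM, so part~(1) applies) and of $S[x_1^{\pm 1},\ldots,x_n^{\pm 1}]$ with $S[\zz^n]$ (a group all of whose nonzero elements are of type $(0,0,\ldots)$, so part~(2) applies) is exactly the intended specialization. The brief type-$(0,0,\ldots)$ check for $\zz^n$ is the only detail worth writing down, and you have it right.
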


\begin{remark} \label{remark: about transfer Krull monoids}
	Transfer Krull monoids (see \cite[Section~4]{G2016} for the definition) were introduced by Geroldinger~\cite{G2016} and, since then, they have been examined across diverse settings~\cite{BR2022,GZ19,GLTZ21}. In \cite[Theorem~3.1]{GZ19}, Geroldinger and Zhong proved that transfer Krull monoids have full elasticity, which begs the question of whether the monoid semidomains described in Proposition~\ref{prop: infinite and full elasticity} are transfer Krull. To address this question, we follow the argument given by Campanini and Facchini in \cite[Remark~5.4]{CF19} to show that $\nn_0[x]$ is not transfer Krull. Let $M$ be a transfer Krull monoid, and let $S[M]$ be a monoid semidomain satisfying that $(S,+)$ is reduced, $M$ is nontrivial and torsion-free, and $\mathcal{F}(S)[M]$ is a UFD. Combining \cite[Proposition~3.2.3]{GH06} and \cite[Lemma~6.4.4]{GH06}, it is possible to show that, for a fixed atom $a \in \mathcal{A}(M)$, the set of lengths $\{\mathsf{L}(aa') \mid a' \in \mathcal{A}(M)\}$ is bounded by a constant depending only on the atom $a$. However, we showed (as part of the proof of Proposition~\ref{prop: infinite and full elasticity}) the following two facts about $S[M]$:
	\begin{enumerate}
		\item $(x^a + n)^n(x^{2a} - x^a + 1)$ is irreducible in $S[M]$ for every $n \in \nn$;
		\item the polynomial expression $(x^a + n)^n(x^{2a} - x^a + 1)(x^a + 1) \in S[M]$ (with $n \in \nn_{>1}$) has exactly two factorizations of lengths $2$ and $n + 2$.
	\end{enumerate}
	Consequently, the set $\left\{\mathsf{L}((x^a + 1)a') \mid a' \in \mathcal{A}(S[M])\right\}$ is not bounded, which implies that $S[M]$ is not transfer Krull.
\end{remark}

We conclude this section by showing that the reverse implication of Proposition~\ref{prop: infinite and full elasticity} does not hold as the following example illustrates.

\begin{example}
	Let $M = \langle (3/2)^n \mid n \in \nn_0 \rangle \subseteq (\qq_{\geq 0}, +)$, and consider the monoid semidomain $\nn_0[M]$. Since $M$ is an FFM (\cite[Theorem~5.6]{fG19}), the semidomain $\nn_0[M]$ is atomic (in fact, an FFS) by Theorem~\ref{prop: FFS}. It was proved in \cite[Proposition~4.4(3)]{ScGG2019} that $M$ has full and infinite elasticity, which implies that $\nn_0[M]$ has full and infinite elasticity too. However, since $M$ is not a UFM, the domain $F[M]$ is not a UFD for any field $F$ containing $\nn_0$ (\cite[Theorem~14.7]{rG84}).
\end{example}

\section{Acknowledgments}
\smallskip

The authors express their gratitude to an anonymous referee for their meticulous review of the manuscript and valuable feedback that significantly enhanced the quality of this paper. Additionally, the authors would like to extend their appreciation to Alfred Geroldinger and the referee for pointing out that Proposition~\ref{prop: infinite and full elasticity} cannot be derived as a direct consequence of a similar result already established for transfer Krull monoids (see Remark~\ref{remark: about transfer Krull monoids}).

\bigskip
%%%%%%%%%%%%%%
%%%%%%%%%%%%%%


\begin{thebibliography}{20}
	
	\bibitem{AM96} D. D. Anderson and B. Mullins: \emph{Finite factorization domains}, Proc. Amer. Math. Soc. \textbf{124} (1996) 389--396.  

	\bibitem{BCG21} N. R. Baeth, S. T. Chapman, and F. Gotti: \emph{Bi-atomic classes of positive semirings}, Semigroup Forum \textbf{103} (2021) 1--23. 
	
	\bibitem{BG20} N. R. Baeth and F. Gotti: \emph{Factorizations in upper triangular matrices over information semialgebras}, J. Algebra \textbf{562} (2020) 466--496.
	
	
	
	\bibitem{BDPW1991} R. W. Barnard, W. Dayawansa, K. Pearce, and D. Weinberg: \emph{Polynomials with nonnegative coefficients}, Proc. Amer. Math. Soc. \textbf{113} (1991) 77--85.
	
	\bibitem{BR2022} A. Bashir and A. Reinhart: \emph{On transfer Krull monoids}, Semigroup Forum (2022) https://doi.org/10.1007/s00233-022-10296-0.

	\bibitem{brunotte} H. Brunotte: \emph{On some classes of polynomials with nonnegative coefficients and a given factor}, Period. Math. Hungar. \textbf{67} (2013) 15--32.

	\bibitem{CF19} F. Campanini and A. Facchini: \emph{Factorizations of polynomials with integral non-negative coefficients}, Semigroup Forum \textbf{99} (2019) 317--332.

	\bibitem{CCMS09} P. Cesarz, S. T. Chapman, S. McAdam, and G. J. Schaeffer: \emph{Elastic properties of some semirings defined by positive systems}. In: Commutative Algebra and Its Applications (Eds. M. Fontana, S. E. Kabbaj, B. Olberding, and I. Swanson), pp. 89--101, Proceedings of the Fifth International Fez Conference on Commutative Algebra and its Applications, Walter de Gruyter, Berlin, 2009.

	\bibitem{CCGS21} S.~T. Chapman, J. Coykendall, F. Gotti, and W. W. Smith: \emph{Length-factoriality in commutative monoids and integral domains}, J. Algebra \textbf{578} (2021) 186--212.

	\bibitem{CGG20} S.~T. Chapman, F. Gotti, and M. Gotti: \emph{When is a Puiseux monoid atomic?}, Amer. Math. Monthly \textbf{128} (2021) 302--321.
	
	\bibitem{ScGG2019} S. T. Chapman, F. Gotti, and M. Gotti: \emph{Factorization invariants of Puiseux monoids generated by geometric sequences}, Comm. Algebra \textbf{48} (2020) 380--396.

	\bibitem{CG19} J. Coykendall and F. Gotti: \emph{On the atomicity of monoid algebras}, J. Algebra \textbf{539} (2019) 138--151.
	
	\bibitem{JCWS2011} J. Coykendall and W. W. Smith: \emph{On unique factorization domains}, J. Algebra \textbf{332} (2011) 62--70.
	
	\bibitem{GLTZ21} W. Gao, C. Liu, S. Tringali, and Q. Zhong: \emph{On half-factoriality of transfer Krull monoids}, Commun. Algebra \textbf{49} (2021) 409--420.
	
	\bibitem{G2016} A. Geroldinger: \emph{Sets of lengths}, Amer. Math. Monthly \textbf{123} (2016) 960--988.

	\bibitem{GH06} A.~Geroldinger and F.~Halter-Koch: \emph{Non-unique Factorizations: Algebraic, Combinatorial and Analytic Theory}, Pure and Applied Mathematics Vol. 278, Chapman \& Hall/CRC, Boca Raton, 2006.
	
	\bibitem{GZ19} A. Geroldinger and Q. Zhong: \emph{Sets of arithmetical invariants in transfer Krull monoids}, J. Pure Appl. Algebra \textbf{223} (2019) 3889--3918.
	
	\bibitem{rG84} R. Gilmer: \emph{Commutative Semigroup Rings}, The University of Chicago Press, 1984.

	\bibitem{JG1999} J. S. Golan: \emph{Semirings and their Applications}, Kluwer Academic Publishers, 1999. 

	\bibitem{fG19} F.~Gotti: \emph{Increasing positive monoids of ordered fields are FF-monoids}, J. Algebra \textbf{518} (2019) 40--56.

	\bibitem{fghp2022} F. Gotti and H. Polo: \emph{On the arithmetic of polynomial semidomains}, Forum Math. (to appear) Preprint on arXiv: https://arxiv.org/pdf/2203.11478.pdf

	\bibitem{fHK92} F. Halter-Koch: \emph{Finiteness theorems for factorizations}, Semigroup Forum \textbf{44} (1992) 112--117.
	
	\bibitem{HN1950} J. Hashimoto and T. Nakayama: \emph{On a problem of G. Birkhoff}, Proc. Amer. Math. Soc. \textbf{1} (1950) 141--142.
	
	\bibitem{kim} H. Kim: \emph{Factorization in monoid domains} (Order No. 9903921), Available from ProQuest Dissertations \& Theses Global. (304491574). Retrieved from https://www.proquest.com/dissertations-theses/factorization-monoid-domains/docview/304491574/se-2.

	\bibitem{matsuda} R. Matsuda: \emph{Torsion-free abelian group rings III}, Bull. Fac. Sci. Ibaraki Univ. Math. \textbf{9} (1977) 1--49.
	
	\bibitem{VP15} V. Ponomarenko: \emph{Arithmetic of semigroup semirings}, Ukr. Math. J. \textbf{67} (2015) 213--229. 

	\bibitem{mR93} M. Roitman: \emph{Polynomial extensions of atomic domains}, J. Pure Appl. Algebra \textbf{87} (1993) 187--199.

	\bibitem{jlS86} J.~L. Steffan: \emph{Longueurs des d\'ecompositions en produits d'\'el\'ements irr\'eductibles dans un anneau de Dedekind}, J. Algebra \textbf{102} (1986) 229--236.
	
	\bibitem{rV90} R. Valenza: \emph{Elasticity of factorization in number fields}, J. Number Theory \textbf{36} (1990) 212--218.
	
	\bibitem{aZ76} A. Zaks: \emph{Half-factorial domains}, Bull. Amer. Math. Soc. \textbf{82} (1976) 721--723.

\end{thebibliography}
\end{document}